\numberwithin{equation}{section}
\numberwithin{figure}{section}
\theoremstyle{plain}
\newtheorem{thm}{\protect\theoremname}[section]
\newtheorem{lemma}{\protect\lemmaname}[section]
\theoremstyle{remark}
\newtheorem*{claim*}{\protect\claimname}
\newtheorem{definition}{Definition}
\providecommand{\claimname}{Claim}
\providecommand{\theoremname}{Theorem}
\providecommand{\lemmaname}{Lemma}
\newcommand{\R}{\mathbb{R}}
\begin{document}

\title{Dimension of ergodic measures projected onto self-similar sets with
overlaps}

\author{Thomas Jordan}

\address{Thomas Jordan\\
School of Mathematics\\
University of Bristol\\
Bristol}

\email{thomas.jordan@bristol.ac.uk}

\author{Ariel Rapaport}

\address{Ariel Rapaport\\
Department of Pure Mathematics and Mathematical Statistics\\
 Centre for Mathematical Sciences\\
Wilberforce Road\\
Cambridge CB3 0WB}

\email{ar977@dpmms.cam.ac.uk}

\subjclass[2000]{\noindent Primary: 28A80, Secondary: 37C45.}

\maketitle
\begin{abstract}
For self-similar sets on $\R$ satisfying the exponential separation
condition we show that the dimension of natural projections of shift invariant ergodic measures is equal to $\min\{1,\frac{h}{-\chi}\}$, where $h$ and $\chi$ are the entropy and Lyapunov exponent respectively. The
proof relies on Shmerkin's recent result on the $L^{q}$ dimension
of self-similar measures. We also use the same method to give results
on convolutions and orthogonal projections of ergodic measures projected
onto self-similar sets. 
\end{abstract}

\section{\label{sec:Introduction-and-statement}Introduction and statement
of results}

The dimension of self-similar measures on the line has been the subject
of much attention going back over forty years, since \cite{Hu}. While
the dimension of self-similar measures is well understood when the
open set condition is satisfied, it has been a long standing problem
to see how the dimension behaves when the condition is not satisfied.
Hochman, in \cite{Ho}, made significant progress by showing that
the dimension of self-similar measures can be found as long as an
exponential separation condition is satisfied, which is a much weaker
condition than the open set condition.

Self-similar measures can be thought of as the projection of Bernoulli
measures from a shift space to the self-similar set. So it is also
possible to consider the question of what happens when general ergodic
measures are projected. In the non-overlapping case it is possible
to easily adapt the standard proof to obtain that the dimension is
given by the ratio of the entropy to the Lyapunov exponent, a result
which can also be seen in several other settings for example \cite{Ma}.

In the overlapping case it is easy to see that the ratio of entropy
with Lyapunov exponent is always an upper bound (see section 3 of
\cite{SSU} or Theorem 2.8 in \cite{FH}, where in addition it is
shown that such measures are exact dimensional). In Theorem 7.2 in
\cite{SSU} this is also shown to be a lower bound almost everywhere
for certain families satisfying a transversality condition. However
the techniques used by Hochman in the exponential separation case
for self-similar measures do not apply, since they rely on the convolution
structure of self-similar measures. More precisiely when the self-similar measure is homogeneous, that is when all of the contractions are the same, it is possible to represent it as a convolution of an arbitrarily small copy of itself with some other measure $\nu$ on $\mathbb{R}$. Outside of the homogeneous case, it is possible to obtain such a representation by taking $\nu$ to be a measure on the affine group of $\mathbb{R}$.

Fortunately it turns out that the result of Shmerkin \cite{Sh}, on
the $L^{q}$ dimension of self-similar measures for $q>1$, can be
used to give the dimension of the projection of arbitrary ergodic
measures. The ideas used involve an analysis of numbers of intersections
of cylinders, which are similar to the ideas introduced by Rams in
\cite{Ra}. In addition, similar ideas combined with other results
from \cite{Sh} can be used to give a different proof of a result
of Hochman and Shmerkin on the dimension of convolutions of times
$n$ and times $m$ invariant measures. In particular the result in Theorem 1.3 of \cite{HS} on the convolution of times $n$, times $m$ invariant measures is a special case of Theorem \ref{thm:conv} in this paper. In section 4 we show how the same ideas can be used to give a result on the orthogonal
projections of ergodic measures supported on self-similar sets in
the plane.

\subsection*{Notation}

Before stating our main result we need to state our setting formally
and fix the notation we will be using. In what follows the base of
the $\log$ and $\exp$ functions is always $2$, so that $\exp(a)=2^{a}$
for $a\in\mathbb{R}$. This means our definitions of entropy and Lyapunov
exponent are slightly different to usual, where the usual exponential
and logarithm are used, but fits in more with the use of entropy dimension
used in \cite{Ho} and \cite{Sh}.

Let $\Lambda$ be a finite nonempty set, and for each $\lambda\in\Lambda$
fix $0<|r_{\lambda}|<1$ and $a_{\lambda}\in\mathbb{R}$. Let
\[
\Phi=\{\varphi_{\lambda}(x)=r_{\lambda}x+a_{\lambda}\}_{\lambda\in\Lambda}
\]
be the associated self-similar iterated function system (IFS) on $\mathbb{R}$.
Let $K$ be the attractor of $\Phi$, i.e. $K$ is the unique nonempty
compact subset of $\mathbb{R}$ with 
\[
K=\cup_{\lambda\in\Lambda}\varphi_{\lambda}(K)\:.
\]

Write $\Omega=\Lambda^{\mathbb{N}}$ and let $\sigma:\Omega\rightarrow\Omega$
be the left shift. Given $n\ge1$ and $\lambda_{1}...\lambda_{n}=w\in\Lambda^{n}$
write $[w]\subset\Omega$ for the cylinder set corresponding to $w$,
$r_{w}$ for $r_{\lambda_{1}}\cdot...\cdot r_{\lambda_{n}}$, and
$\varphi_{w}$ for $\varphi_{\lambda_{1}}\circ...\circ\varphi_{\lambda_{n}}$.
For $(\omega_{k})_{k\ge0}=\omega\in\Omega$ set $\omega|_{n}=\omega_{0}...\omega_{n-1}\in\Lambda^{n}$.
Let $\Pi:\Omega\rightarrow K$ be the coding map for $\Phi$, i.e.
\[
\Pi\omega=\underset{n\rightarrow\infty}{\lim}\varphi_{\omega|_{n}}(0)\text{ for }\omega\in\Omega.
\]

We will always assume that our system satisfies an exponential separation
condition introduced by Hochman in \cite{Ho}. We define the distance
between two affine maps $g_{i}(x)=r_{i}x+a_{i}$ on $\mathbb{R}$
as, 
\[
d(g_{1},g_{2})=\left\{ \begin{array}{lll}
|a_{1}-a_{2}| & \text{ if } & r_{1}=r_{2}\\
\infty & \text{ if } & r_{1}\neq r_{2}
\end{array}\right..
\]
It is easy to see that the following definition is equivalent to the
one given in \cite[Section 6.4]{Sh}.

\begin{definition} \label{def:exp_sep} We say that the IFS $\Phi$
has exponential separation if there exist $c>0$ and an increasing
sequence $\{n_{j}\}_{j\geq1}\subset\mathbb{N}$ such that, 
\[
d(\varphi_{w_{1}},\varphi_{w_{2}})\geq c^{n_{j}}\text{ for all }j\geq1\text{ and }w_{1},w_{2}\in\Lambda^{n_{j}}\text{ with }w_{1}\ne w_{2}.
\]
\end{definition} 

This condition is satisfied for instance if $\{r_\lambda\}_{\lambda\in \Lambda}$ and $\{a_\lambda\}_{\lambda\in \Lambda}$ are all algebraic numbers and the maps in $\Phi$ generate a free semigroup. Additionally, in  \cite[Theorem 1.8]{Ho} and \cite[Theorem 1.10]{Ho2} Hochman has shown that in quite general parametrized families of self-similar iterated function systems, the exponential separation condition holds  outside of a set of parameters of packing and Hausdorff co-dimension at least $1$.

For $\delta>0$ and $x\in\mathbb{R}$ write $B(x,\delta)$ for the
interval $[x-\delta,x+\delta]$. A Borel probability measure $\theta$
on $\mathbb{R}$ is said to be exact dimensional if there exists a
number $s\geq0$ with, 
\[
\underset{\delta\downarrow0}{\lim}\:\frac{\log\theta(B(x,\delta))}{\log\delta}=s\text{ for \ensuremath{\theta}-a.e. \ensuremath{x\in\mathbb{R}}},
\]
in which case we write $\dim\theta=s$.

Given a Borel probability measure $\mu$ on $\Omega$ we write $\Pi\mu$
for the push-forward of $\mu$ by $\Pi$. Assuming $\mu$ is $\sigma$-invariant
and ergodic, it follows from \cite[Theorem 2.8]{FH} that $\Pi\mu$
is exact dimensional. We write $h_{\mu}$ for the entropy of $\mu$
and $\chi_{\mu}$ for its Lyapunov exponent with respect to $\{r_{\lambda}\}_{\lambda\in\Lambda}$,
i.e. 
\[
\chi_{\mu}=\sum_{\lambda\in\Lambda}\mu[\lambda]\log|r_{\lambda}|\:.
\]

\subsection*{Main result and structure of the paper}

\begin{thm} \label{thm:main} Suppose that $\Phi$ has exponential
separation, and let $\mu$ be a $\sigma$-invariant and ergodic probability
measure on $\Omega$. Then, 
\[
\dim\Pi\mu=\min\{1,\frac{h_{\mu}}{-\chi_{\mu}}\}\:.
\]
\end{thm}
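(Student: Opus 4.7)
The upper bound $\dim\Pi\mu \le \min\{1, h_{\mu}/(-\chi_{\mu})\}$ is standard: it is trivially at most $1$, while $\dim\Pi\mu \le h_{\mu}/(-\chi_{\mu})$ follows from \cite[Theorem 2.8]{FH}. So I focus on the matching lower bound, and may assume $h_{\mu} > 0$. Set $s := \min\{1, h_{\mu}/(-\chi_{\mu})\}$. By Shannon--McMillan--Breiman and the Birkhoff ergodic theorem, for $\mu$-a.e.\ $\omega$ and every $\epsilon > 0$ we have $\mu[\omega|_n] = 2^{-n h_{\mu} + o(n)}$ and $|r_{\omega|_n}| = 2^{n\chi_{\mu} + o(n)}$ as $n \to \infty$. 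Since $\Pi\mu$ is exact dimensional, it suffices to prove $\Pi\mu(B(\Pi\omega, \delta_n)) \le \delta_n^{s - O(\epsilon)}$ along $\delta_n := |r_{\omega|_n}|$.

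Write
\[
\Pi\mu(B(\Pi\omega, \delta_n)) \le \sum_{w \in A_n(\omega)} \mu[w], \qquad A_n(\omega) := \{w \in \Lambda^n : \varphi_w(K) \cap B(\Pi\omega, \delta_n) \ne \emptyset\}.
\]
The ``heavy'' words $w \in A_n(\omega)$ with $\mu[w] > 2^{-n(h_{\mu} - \epsilon)}$ carry a total $\mu$-mass of at most $\epsilon$ by SMB, so the proof reduces to the overlap-counting estimate $|A_n(\omega)| \le 2^{n(h_{\mu} - s(-\chi_{\mu}) + O(\epsilon))}$ for $\mu$-a.e.\ $\omega$; combined with the typical bound $\mu[w] \le 2^{-n(h_{\mu} - \epsilon)}$ on the remaining words, this yields $\sum \mu[w] \le \delta_n^{s - O(\epsilon)}$ as required.

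To control the overlap count I apply Shmerkin's $L^q$-dimension theorem to the self-similar measure $\nu^{(n)}$ for the iterated IFS $\Phi^n = \{\varphi_w\}_{w \in \Lambda^n}$ with weights $(\mu[w])_{w \in \Lambda^n}$. The system $\Phi^n$ inherits exponential separation from $\Phi$, and by SMB/Birkhoff its similarity dimension converges to $h_{\mu}/(-\chi_{\mu})$ as $n \to \infty$; hence by \cite{Sh} the $L^q$-dimension of $\nu^{(n)}$ is at least $s - \epsilon$ for $q > 1$ close to $1$ and $n$ sufficiently large. The self-similarity identity $\nu^{(n)} = \sum_w \mu[w]\,\varphi_w \nu^{(n)}$ yields the geometric comparison
\[
\nu^{(n)}(B(\Pi\omega, C\delta_n)) \ge c \sum_{w \in A_n(\omega)} \mu[w],
\]
so any bound $\nu^{(n)}(B(\Pi\omega, C\delta_n)) \le \delta_n^{s - O(\epsilon)}$ (valid for $\mu$-typical $\omega$) implies the required overlap-count estimate.

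The main technical obstacle is the transfer: Shmerkin's theorem naturally delivers bounds valid on a large $\nu^{(n)}$-mass set, whereas $\nu^{(n)}$ and $\Pi\mu$ are generically mutually singular. I would address this by combining two ingredients: first, the pointwise comparison $\Pi\mu(B(x, \delta_n)) \le C \nu^{(n)}(B(x, C\delta_n))$ (which holds for all $x$, by the same cylinder decomposition) together with Shmerkin's $L^q$-bound gives, after integrating, an $L^q$-estimate for $\Pi\mu$ itself at scale $\delta_n$; second, a Chebyshev inequality followed by a Borel--Cantelli argument along the geometric sequence $\delta_n$---summability coming from polynomial-in-$\delta_n$ decay of the exceptional $\Pi\mu$-measure---promotes the $L^q$-estimate into a $\mu$-a.e.\ pointwise statement, completing the proof. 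The overlap-counting analysis here is in the spirit of \cite{Ra}, and the interplay between the $L^q$ control on $\nu^{(n)}$ and the pointwise behaviour of $\Pi\mu$ is the delicate step of the argument.
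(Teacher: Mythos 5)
Your high-level strategy -- approximate $\mu$ at level $n$ by a Bernoulli (self-similar) measure, invoke Shmerkin's $L^q$ theorem for that measure, and convert the resulting ball-mass bound into an overlap count for the cylinders meeting $B(\Pi\omega,\delta_n)$ -- is indeed the strategy of the paper. But there is a genuine gap in the central step, namely the choice of weights $(\mu[w])_{w\in\Lambda^n}$ for $\nu^{(n)}$ together with ``$q>1$ close to $1$''. Shmerkin's Lemma 1.7 converts an $L^q$ dimension $D$ into a pointwise bound of the form $\nu(B(x,\eta))\le\eta^{(1-1/q)D}$; the exponent carries the factor $1-1/q$, which vanishes as $q\to1$. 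So to get $\nu^{(n)}(B(x,\eta))\le\eta^{s-O(\epsilon)}$ you must take $q$ \emph{large} (the paper takes $q=\delta^{-1}$). For large $q$, however, the $L^q$ dimension of the self-similar measure with the raw weights $\mu[w]$ is governed by $\max_w\mu[w]\approx 2^{-n\underline{h}}$, where $\underline{h}=\lim_n -\frac1n\log\max_{w\in\Lambda^n}\mu[w]$ is in general strictly smaller than $h_\mu$ (already for a non-uniform Bernoulli measure $\mu$, since $-\log\max_\lambda p_\lambda< H(p)$). Equivalently, $\tau_n(q)/(q-1)$ for fixed large $q$ tends to the $L^q$ spectrum of $\mu$, not to $h_\mu/(-\chi_\mu)$. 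So the two requirements -- $q$ large for the pointwise bound, $q$ near $1$ for the dimension lower bound -- are incompatible with the untruncated weights. This is exactly why the paper replaces $\mu[w]$ by $p_w=c\,\mu[w]$ only on the SMB-typical set $\mathcal{W}$ (where $\mu[w]\approx 2^{-mh}$ and $|r_w|\approx 2^{m\chi}$) and assigns negligible weight $c\,2^{-m\epsilon^{-1}}$ elsewhere: after this flattening, $\Vert p\Vert_q^q\le\Vert p\Vert_\infty^{q-1}\le 2^{q-1}2^{-m(h-\delta)(q-1)}$, which forces $\tau/(q-1)\ge h/(-\chi)-O(\delta)$ even for $q=\delta^{-1}$. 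Without the truncation the argument does not give the sharp exponent for general ergodic $\mu$.

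Two further points. First, your geometric comparison $\nu^{(n)}(B(\Pi\omega,C\delta_n))\ge c\sum_{w\in A_n(\omega)}\mu[w]$ fails for words $w\in A_n(\omega)$ with $|r_w|\gg\delta_n$ (then $\varphi_w(K)\not\subset B(\Pi\omega,C\delta_n)$ and $\nu^{(n)}(\varphi_w^{-1}B)$ has no lower bound); one must restrict to words with $|r_w|$ comparable to $2^{n\chi}$, which again is the role of $\mathcal{W}$ and of the set $\Omega_0$ in the paper. Relatedly, bounding the contribution of the ``heavy'' words by a constant $\epsilon$ does not yield a pointwise estimate $\le\delta_n^{s-O(\epsilon)}$; the correct fix is to pass to the restricted measure $\mu_0=\mu(\cdot\cap\Omega_0)/\mu(\Omega_0)$, whose projection has the same exact dimension since $\Pi\mu_0\ll\Pi\mu$, so that heavy and short cylinders simply do not occur. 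Second, your closing paragraph about mutual singularity of $\nu^{(n)}$ and $\Pi\mu$ and a Chebyshev/Borel--Cantelli transfer chases a problem that is not there: Shmerkin's Lemma 1.7 gives a ball-mass bound valid for \emph{every} $x\in\mathbb{R}$ and every small $\eta$, not merely $\nu^{(n)}$-a.e.\ $x$, and the paper's proof relies precisely on this uniformity (no transfer of null sets is needed). There remains one issue you do not address at all: $\mu$ ergodic for $\sigma$ need not be ergodic for $\sigma^n$, so the Birkhoff averages along the blocks of length $n$ starting at multiples of $n$ need not converge to the right constant; the paper handles this by a pigeonhole over the $m$ possible offsets $j$ and by working with the shifted measures $\sigma^j\nu$.
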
 The proof of Theorem \ref{thm:main} is given in the next
section. We first construct suitable self-similar measures, and apply
Shmerkin's results on the $L^{q}$ dimension to these measures. We
then show that these results, together with the connection between
the self-similar and ergodic measures, yield that the dimension can
only drop by an amount which can be made arbitrarily small. For a full definition of $L^q$ dimensions of a measure we refer the reader to section 1.3 in \cite{Sh}. The key result we will be making use of connects $L^q$ dimensions to bounds on the local dimension and is Lemma 1.7 in \cite{Sh}.

In the rest of the paper we state some other applications of this
method to convolutions of ergodic measures and to orthogonal projections
of ergodic measures on the plane. 

\section{Proof of Theorem \ref{thm:main}.}

Fix a $\sigma$-invariant and ergodic measure $\mu$ on $\Omega$,
and write $h$ for $h_{\mu}$ and $\chi$ for $\chi_{\mu}$. We start
with the construction of suitable Bernoulli measures. Let $\beta=\min\{1,\frac{h}{-\chi}\}$,
and in order to obtain a contradiction assume that $\dim\Pi\mu<\beta$.
In particular we have $h>0$. Let $0<\epsilon<\beta-\dim\Pi\mu$ be
small in a manner depending on $\Phi$ and $\mu$, let $\delta>0$
be small with respect to $\epsilon$, and let $m\ge1$ be large with
respect to $\delta$.

Write, 
\[
\mathcal{W}=\{w\in\Lambda^{m}\::\:2^{-m(h+\delta)}\le\mu[w]\le2^{-m(h-\delta)}\text{ and }|r_{w}|\ge2^{m(\chi-\delta)}\}\:.
\]
By combining Egorov's Theorem with the Shannon-Macmillan-Breiman Theorem
and the ergodic theorem (applied to the function $\omega\mapsto\log r_{\omega_{0}}$),
it can be seen that by taking $m$ sufficiently large we can obtain
that 
\begin{equation}
\mu(\cup_{w\in\mathcal{W}}[w])>1-\delta\:.\label{eq:mass of good cylinders}
\end{equation}
For $w\in\Lambda^{m}$ set 
\[
p_{w}=\begin{cases}
\mu[w]\cdot c & \text{if }w\in\mathcal{W}\\
2^{-m\epsilon^{-1}}\cdot c & \text{otherwise}
\end{cases},
\]
where $c>0$ is chosen so that $\sum_{w\in\Lambda^{m}}p_{w}=1$. By
(\ref{eq:mass of good cylinders}) and by assuming that $\epsilon^{-1}>\log|\Lambda|$ it follows
that $1/2\le c\le2$. Write $p=(p_{w})_{w\in\Lambda^{m}}$ and let
$\nu$ be the measure on $\Omega$ with, 
\[
\nu[w_{1}...w_{l}]=p_{w_{1}}\cdot...\cdot p_{w_{l}}\text{ for each }w_{1},...,w_{l}\in\Lambda^{m}\:.
\]
We now relate the expected behaviour of the $L^{q}$ dimension of
$\Pi\nu$ to the expected dimension of $\Pi\mu$. Write $q$ for $\delta^{-1}$
and let $\tau>0$ be the unique solution to, 
\[
\sum_{w\in\Lambda^{m}}p_{w}^{q}|r_{w}|^{-\tau}=1\:.
\]
\begin{lemma} By taking $\epsilon$ and $\delta$ to be small enough, and $m$ to be large enough, we may assume that, 
\begin{equation}
\frac{\tau}{q-1}\ge\frac{h}{-\chi}-O(\delta)\:.\label{eq:lb of tau}
\end{equation}
\end{lemma}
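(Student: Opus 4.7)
The plan is to exploit the monotonicity of the function
\[
F(s) := \sum_{w \in \Lambda^m} p_w^q |r_w|^{-s}.
\]
Since every $|r_w|<1$, $F$ is strictly increasing in $s$, and $\tau$ is by definition the unique solution of $F(\tau)=1$. It therefore suffices to exhibit a candidate value $s_0 := (q-1)\bigl(\tfrac{h}{-\chi} - C\delta\bigr)$, for a constant $C = C(\Phi,\mu)$ chosen below, such that $F(s_0)\le 1$; this will force $\tau \ge s_0$, which rearranges to the claim. A Jensen-type argument in the opposite direction would give an \emph{upper} bound on $\tau$, so one really does have to bound $F$ directly.

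For $w \in \mathcal{W}$ I use the bounds $p_w \le 2\mu[w]$, $\mu[w] \le 2^{-m(h-\delta)}$, and $|r_w| \ge 2^{m(\chi-\delta)}$. Writing $p_w^q = p_w \cdot p_w^{q-1}$ and absorbing one factor of $p_w$ into $\sum_{\mathcal{W}}\mu[w] \le 1$, the $\mathcal{W}$-contribution is bounded by
\[
2^{q-1}\cdot 2^{m E_1},\qquad E_1 := -(q-1)(h-\delta) + s_0(-\chi+\delta).
\]
Substituting $s_0(-\chi) = (q-1)h - C\delta(q-1)(-\chi)$ collapses $E_1$ to $\delta(q-1)\bigl[1 + h/(-\chi) - C(-\chi) - C\delta\bigr]$; since $\delta(q-1)=1-\delta$, choosing $C$ larger than an explicit constant depending on $|\chi|$ and $h/|\chi|$ makes $E_1 \le -\eta_1$ for a fixed $\eta_1>0$ independent of $\delta$ and $m$.

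For $w \notin \mathcal{W}$ every $p_w$ equals $c\cdot 2^{-m/\epsilon}$ and there are at most $|\Lambda|^m$ such words. Using the crude bound $|r_w|^{-s_0}\le 2^{mRs_0}$ with $R := -\log\min_\lambda|r_\lambda|$, and $s_0 \le (q-1)h/(-\chi)$, this contribution is at most $2^{q}\cdot 2^{m E_2}$ with
\[
E_2 \le \log|\Lambda| - q\Bigl(\tfrac{1}{\epsilon} - \tfrac{Rh}{-\chi}\Bigr).
\]
The hypothesis that $\epsilon$ be small depending on $\Phi$ and $\mu$ can be strengthened to $\epsilon^{-1} > 2Rh/(-\chi)$ (still of the type already imposed), and then taking $\delta$ small enough that $q=\delta^{-1}$ is large makes $E_2 \le -\eta_2$ for a fixed $\eta_2>0$.

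Both pieces are thus at most $2^q\cdot 2^{-m\eta}$ with $\eta := \min(\eta_1,\eta_2)>0$. Taking $m$ large compared to $q/\eta = 1/(\delta\eta)$ absorbs the $2^q$ prefactor to give $F(s_0) \le 1$. The main technical hurdle is precisely this parameter hierarchy: one must first fix $C$ and $\epsilon$ depending only on $\Phi$ and $\mu$, then $\delta$ small enough with respect to $\epsilon$, and finally $m$ large compared to $1/\delta$, respecting the ordering already laid out in the paper. Once done, $\tau \ge s_0$, so $\tau/(q-1) \ge h/(-\chi) - C\delta = h/(-\chi) - O(\delta)$.
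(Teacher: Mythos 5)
Your proof is correct, and the parameter hierarchy (first $C$ and the lower bound on $\epsilon^{-1}$, then $\delta$, then $m$) is handled properly; the identity $\delta(q-1)=1-\delta$ and the collapse of $E_1$ both check out. The core estimates are the same as the paper's: both arguments split $\sum_{w}p_w^q|r_w|^{-s}$ into the contribution of $\mathcal{W}$, controlled via $p_w\le 2\mu[w]\le 2^{1-m(h-\delta)}$ and $|r_w|\ge 2^{m(\chi-\delta)}$, and the contribution of $\Lambda^m\setminus\mathcal{W}$, which is negligible because $p_w=c2^{-m\epsilon^{-1}}$ there and $\epsilon^{-1}$ is large in a way depending on $\Phi$ and $\mu$. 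The organization, however, is genuinely different. The paper works with the defining identity $F(\tau)=1$ itself: since $\tau$ is unknown, it must first derive an a priori upper bound $\tau\le -2qh/\log\rho_2$ in order to control $|r_w|^{-\tau}$ on the complement, then shows the complement contributes at most $1/2$, solves the surviving inequality $1/2\le 2^{m\tau(\delta-\chi)}\Vert p\Vert_q^q$ for $\tau$, and finishes with $\Vert p\Vert_q^q\le\Vert p\Vert_\infty^{q-1}$. You instead test the explicit candidate $s_0=(q-1)\bigl(\tfrac{h}{-\chi}-C\delta\bigr)$ and invoke strict monotonicity of $F$ to get $\tau\ge s_0$ from $F(s_0)\le 1$; because $s_0$ is explicit, the trivial bound $s_0\le(q-1)h/(-\chi)$ replaces the a priori bound on $\tau$ and the whole lemma reduces to one direct estimate. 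Your route is marginally cleaner for that reason; the paper's route has the minor advantage of passing through $\Vert p\Vert_q^q$, the quantity that feeds directly into Shmerkin's formula for the $L^q$ dimension and reappears in the analogous lemmas of Sections 3 and 4.
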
 \begin{proof} Write $\rho_{1}=\underset{\lambda\in\Lambda}{\min}|r_{\lambda}|$,
$\rho_{2}=\underset{\lambda\in\Lambda}{\max}|r_{\lambda}|$, $\Vert p\Vert_{q}^{q}=\sum_{w\in\Lambda^{m}}p_{w}^{q}$,
and $\Vert p\Vert_{\infty}=\underset{w\in\Lambda^{m}}{\max}\:p_{w}$.
Then, 
\[
0\ge\log\left(\sum_{w\in\Lambda^{m}}p_{w}^{q}\rho_{2}^{-m\tau}\right)=\log\Vert p\Vert_{q}^{q}-m\tau\log\rho_{2}\:.
\]
We may assume $\delta<h$, hence 
\[
\Vert p\Vert_{q}^{q}\ge\Vert p\Vert_{\infty}^{q}\ge2^{-mq(h+\delta)}\ge2^{-2mqh},
\]
and so, 
\[
\tau\le\frac{\log\Vert p\Vert_{q}^{q}}{m\log\rho_{2}}\le\frac{-2qh}{\log\rho_{2}}\:.
\]
From this and by the definitions of $\mathcal{W}$ and $p$, 
\begin{eqnarray*}
1 & = & \sum_{w\in\Lambda^{m}}p_{w}^{q}\cdot|r_{w}|^{-\tau}\\
 & \le & \sum_{w\in\mathcal{W}}p_{w}^{q}\cdot2^{m\tau(\delta-\chi)}+\sum_{w\in\Lambda^{m}\setminus\mathcal{W}}c^{q}2^{-m\epsilon^{-1}q}\cdot\rho_{1}^{-m\tau}\\
 & \le & 2^{m\tau(\delta-\chi)}\Vert p\Vert_{q}^{q}+\sum_{w\in\Lambda^{m}\setminus\mathcal{W}}\exp\left(q(1-m\epsilon^{-1}+2mh\frac{\log\rho_{1}}{\log\rho_{2}})\right)\\
 & \le & 2^{m\tau(\delta-\chi)}\Vert p\Vert_{q}^{q}+\exp\left(m\log|\Lambda|+q(1-m\epsilon^{-1}+2mh\frac{\log\rho_{1}}{\log\rho_{2}})\right)\:.
\end{eqnarray*}
By choosing $\epsilon$ small enough in a manner depending on $\Phi$
and $\mu$ we may clearly assume that, 
\[
m\log|\Lambda|+q(1-m\epsilon^{-1}+2mh\frac{\log\rho_{1}}{\log\rho_{2}})<-1\:.
\]
Hence 
\[
1/2\le2^{m\tau(\delta-\chi)}\Vert p\Vert_{q}^{q},
\]
and so 
\[
\tau\ge\frac{-1-\log\Vert p\Vert_{q}^{q}}{m(\delta-\chi)}\:.
\]
We also have, 
\begin{eqnarray*}
\Vert p\Vert_{q}^{q} & \le & \Vert p\Vert_{\infty}^{q-1}\sum_{w\in\Lambda^{m}}p_{w}\\
 & \le & c^{q-1}\exp(-m(h-\delta)(q-1))\\
 & \le & \exp((q-1)(1-m(h-\delta)))\:.
\end{eqnarray*}
Hence by assuming that $m$ is large enough with respect to $\delta$,
\[
\frac{\tau}{q-1}\ge\frac{h-\delta}{\delta-\chi}-\delta,
\]
which completes the proof of the lemma. \end{proof}

To apply Shmerkin's result we will need the following lemma. Its proof
is a simple consequence of the fact that $\Phi$ has exponential separation,
and is therefore omitted.

\begin{lemma}\label{nthlevel} The IFS $\{\varphi_{w}\}_{w\in\Lambda^{m}}$
has exponential separation. \end{lemma}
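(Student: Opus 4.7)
The exponential separation of $\Phi$ provides an increasing sequence $\{n_j\}$ and a constant $c>0$ with $d(\varphi_{w_1},\varphi_{w_2})\geq c^{n_j}$ for all distinct $w_1,w_2\in\Lambda^{n_j}$. Since words of length $k$ in the IFS $\{\varphi_w\}_{w\in\Lambda^m}$ correspond to elements of $\Lambda^{mk}$, the goal is to produce an increasing sequence $\{k_j\}$ and a constant $c'>0$ such that $d(\varphi_{u_1},\varphi_{u_2})\geq (c')^{k_j}$ for all distinct $u_1,u_2\in\Lambda^{mk_j}$. The only real obstacle is that the levels $n_j$ need not be multiples of $m$, so separation at levels of the form $mk_j$ does not follow directly from the hypothesis; the plan is to bridge this gap by appending a short fixed suffix.

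The key elementary observation is that appending a common suffix preserves the distance $d$. Writing $\varphi_w(x)=r_w x+a_w$ and using $\varphi_{ut}=\varphi_u\circ\varphi_t$, one has $r_{ut}=r_u r_t$ and $a_{ut}=r_u a_t+a_u$. It follows that $r_{ut}=r_{vt}$ iff $r_u=r_v$ (since $r_t\neq 0$), and in that case $a_{ut}-a_{vt}=a_u-a_v$. Hence $d(\varphi_{ut},\varphi_{vt})=d(\varphi_u,\varphi_v)$ for any suffix $t$, with both sides being infinite precisely when $r_u\neq r_v$.

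With this in hand, I would set $k_j=\lfloor n_j/m\rfloor$ and fix any word $t_j\in\Lambda^{n_j-mk_j}$ (take $t_j$ empty if $m\mid n_j$). For distinct $u_1,u_2\in\Lambda^{mk_j}$, the extensions $u_1 t_j,u_2 t_j\in\Lambda^{n_j}$ are still distinct, so applying exponential separation at level $n_j$ and then the observation above gives
\[
d(\varphi_{u_1},\varphi_{u_2})\;=\;d(\varphi_{u_1 t_j},\varphi_{u_2 t_j})\;\geq\;c^{n_j}\;\geq\;c^{m(k_j+1)}\;\geq\;(c^{2m})^{k_j}
\]
for every $j$ with $k_j\geq 1$ (the last step uses $c<1$ and $k_j\geq 1$). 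Since $n_j\to\infty$ forces $k_j\to\infty$, a strictly increasing subsequence of $\{k_j\}$ can be extracted, witnessing exponential separation for $\{\varphi_w\}_{w\in\Lambda^m}$ with constant $c'=c^{2m}$. Every step beyond the one-line suffix-invariance of $d$ is a direct computation, which presumably explains why the authors omit the proof.
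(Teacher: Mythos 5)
Your proof is correct. The paper actually omits the proof of this lemma, stating only that it is a simple consequence of the exponential separation of $\Phi$; your argument (the suffix-invariance of $d$, padding words of length $mk_j$ with $k_j=\lfloor n_j/m\rfloor$ up to length $n_j$, and absorbing the bounded defect $n_j-mk_j<m$ into the constant $c'=c^{2m}$, after noting one may take $c<1$) is exactly the routine verification the authors had in mind.
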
 We can now use Shmerkin's
result on the $L^{q}$ dimension of self-similar measures with exponential
separation. Fix some $0<\alpha<\min\{\frac{\tau}{q-1},1\}$. \begin{lemma}
There exists $\eta_{0}>0$, which depends on all previous parameters,
such that 
\begin{equation}
\Pi\sigma^{j}\nu(B(x,\eta))\le\eta^{(1-\delta)\alpha}\text{ for all }0\le j<m,\;0<\eta\le\eta_{0}\text{ and }x\in\mathbb{R}\:.\label{eq:all j mass of balls small}
\end{equation}
\end{lemma}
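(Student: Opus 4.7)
The plan has three steps: invoke Shmerkin's $L^q$-dimension theorem on $\Pi\nu$; convert the resulting $L^q$ estimate to a uniform pointwise bound on ball masses for $\Pi\nu$ via Lemma 1.7 of \cite{Sh}; and transfer this pointwise bound to $\Pi\sigma^j\nu$ for every $0\le j<m$ using a short self-similarity-style decomposition.

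For the first two steps: by Lemma \ref{nthlevel} the IFS $\{\varphi_w\}_{w\in\Lambda^m}$ equipped with the probability vector $(p_w)$ has exponential separation, so Shmerkin's theorem yields $D_q(\Pi\nu)=\min\{1,\tau/(q-1)\}$, which by the choice of $\alpha$ is strictly larger than $\alpha$. Pick $\alpha'\in(\alpha,\min\{1,\tau/(q-1)\})$, and note $q=\delta^{-1}$ gives $(q-1)/q=1-\delta$. Lemma 1.7 of \cite{Sh} then produces a threshold $\eta_1>0$ such that
\[
\sup_{x\in\R}\Pi\nu(B(x,\eta))\le\eta^{(1-\delta)\alpha'}\qquad\text{for all }0<\eta\le\eta_1.
\]

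For the transfer to $\sigma^j\nu$ the key observation is the Bernoulli structure of $\nu$: under $\nu$ the blocks $(\omega_{km},\ldots,\omega_{(k+1)m-1})$, $k\ge 0$, are i.i.d.\ with law $p$, so for $0\le j<m$ the suffix $(\omega_j,\ldots,\omega_{m-1})$ of the first block is independent of $\sigma^m\omega\sim\nu$. Combined with the identity $\Pi\sigma^j\omega=\varphi_{\omega_j\cdots\omega_{m-1}}(\Pi\sigma^m\omega)$, this yields the convex decomposition
\[
\Pi\sigma^j\nu=\sum_{u\in\Lambda^{m-j}} q_u\,(\varphi_u)_*\Pi\nu,\qquad q_u=\!\!\!\sum_{v\in\Lambda^j:\,vu\in\Lambda^m}\!\!\! p_{vu}.
\]
Evaluating on $B(x,\eta)$ and applying the bound from the previous step, valid provided $\eta/|r_u|\le\eta_1$ for each $u$, gives
\[
\Pi\sigma^j\nu(B(x,\eta))=\sum_u q_u\,\Pi\nu\bigl(B(\varphi_u^{-1}(x),\eta/|r_u|)\bigr)\le C_j\,\eta^{(1-\delta)\alpha'},
\]
with $C_j=\sum_u q_u|r_u|^{-(1-\delta)\alpha'}$ finite. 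Setting $C=\max_{0\le j<m}C_j$ and using $\alpha'>\alpha$, one absorbs $C$ into the exponent by shrinking $\eta$ further; the resulting $\eta_0$ gives $\Pi\sigma^j\nu(B(x,\eta))\le\eta^{(1-\delta)\alpha}$ uniformly in $j$ and $x$.

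The substantive input is the black box of Shmerkin's $L^q$ theorem together with Lemma 1.7 of \cite{Sh}; the main obstacle is simply to recognise that the decomposition of $\sigma^j\nu$ above lets one reduce from $\sigma^j\nu$ to $\nu$ itself, after which the rest is bookkeeping.
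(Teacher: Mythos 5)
Your proof is correct and follows essentially the same route as the paper: Lemma \ref{nthlevel} plus Shmerkin's Theorem 6.6 and Lemma 1.7 give the uniform bound $\Pi\nu(B(x,\eta))\le\eta^{(1-\delta)\alpha'}$, and this is transferred to $\Pi\sigma^{j}\nu$ by decomposing over the first block and absorbing the resulting multiplicative constant into the exponent gap $\alpha'-\alpha$. The only cosmetic difference is that the paper sums over prefixes $u\in\Lambda^{j}$ and simply bounds $\nu|_{[u]}\le\nu$, so the balls shrink under $\varphi_{u}$ and the constant is just $|\Lambda|^{m}$, whereas your exact Bernoulli decomposition over suffixes makes the balls expand and requires the (harmless, since there are finitely many $u$) extra condition $\eta\le|r_{u}|\eta_{1}$.
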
 \begin{proof} By Lemma \ref{nthlevel} the IFS $\{\varphi_{w}\}_{w\in\Lambda^{m}}$
has exponential separation. Thus from \cite[Theorem 6.6]{Sh} it follows
that the $L^{q}$ dimension of $\Pi\nu$ is equal to $\min\{\frac{\tau}{q-1},1\}$.
Write 
\[
\alpha'=\frac{1}{2}(\alpha+\min\{\frac{\tau}{q-1},1\}),
\]
then by \cite[Lemma 1.7]{Sh} and $q=\delta^{-1}$ it follows that
there exists $\eta_{1}>0$ with, 
\[
\Pi\nu(B(x,\eta))\le\eta^{(1-\delta)\alpha'}\text{ for all }0<\eta\le\eta_{1}\text{ and }x\in\mathbb{R}\:.
\]
\par Let $\eta_{0}>0$ be small with respect to $\eta_{1}$, $m$,
$|\Lambda|$ and $\alpha'-\alpha$. Given a Borel set $E\subset\Omega$
write $\nu|_{E}$ for the restriction of $\nu$ to $E$. For every
$0\le j<m$, $0<\eta\le\eta_{0}$, $x\in\mathbb{R}$, and $u\in\Lambda^{j}$,
\begin{eqnarray*}
\Pi\sigma^{j}(\nu|_{[u]})(B(x,\eta)) & = & \nu\{\omega\in[u]\::\:\Pi\sigma^{j}\omega\in B(x,\eta)\}\\
 & = & \nu\{\omega\in[u]\::\:\varphi_{u}^{-1}\Pi\omega\in B(x,\eta)\}\\
 & = & \nu\{\omega\in[u]\::\:\Pi\omega\in B(\varphi_{u}x,\eta r_{u})\}\\
 & \le & \Pi\nu(B(\varphi_{u}x,\eta r_{u}))\le\eta^{(1-\delta)\alpha'}\:.
\end{eqnarray*}
Hence, 
\[
\Pi\sigma^{j}\nu(B(x,\eta))=\sum_{u\in\Lambda^{j}}\Pi\sigma^{j}(\nu|_{[u]})(B(x,\eta))\le|\Lambda|^{m}\eta^{(1-\delta)\alpha'}<\eta^{(1-\delta)\alpha},
\]
which completes the proof of the lemma. \end{proof} We now need to
relate the behaviour of the Bernoulli measure $\nu$ and our original
ergodic measure $\mu$. Define $f:\Omega\mapsto\R$ by 
\[
f(\omega)=-\frac{1}{m}1_{\mathcal{W}}(\omega|_{m})\log\mu[\omega|_{m}],
\]
for all $\omega\in\Omega$. By the definition of $f$ and $\mathcal{W}$
we have that $\int f\:d\mu\le h+\delta$ . Let $N\ge1$ be large with
respect to all previous parameters. Let $\Omega_{0}$ be the set of
all $\omega\in\Omega$ such that for every $n\ge N$, 
\begin{enumerate}
\item $\mu[\omega|_{nm}]<2^{-nm(h-\delta)}$; 
\item $|r_{\omega|_{nm}}|<2^{nm(\chi+\delta)}$; 
\item \label{Omega0} $\frac{1}{nm}\sum_{k=0}^{nm-1}f(\sigma^{k}\omega)+\frac{1}{\epsilon nm}\sum_{k=0}^{nm-1}1_{\{(\sigma^{k}\omega)|_{m}\notin\mathcal{W}\}}\le h+2\delta(1+\epsilon^{-1})\:.$ 
\end{enumerate}
By $\int f\:d\mu\le h+\delta$ and $\eqref{eq:mass of good cylinders}$,
and since $\mu$ is ergodic, we may assume that $\mu(\Omega_{0})>1/2$.
Note that the fact that $\mu$ is ergodic for $\sigma$ does not necessarily
imply that $\mu$ is ergodic for $\sigma^{m}$, the following lemma
allows us to take care of this. \begin{lemma}\label{nergodic} There
exists a global constant $c_{1}>1$ such that for every $\omega\in\Omega_{0}$
and $n\ge N$, 
\begin{equation}
-\frac{1}{nm}\log\sigma^{j}\nu[\omega|_{nm}]\le h+c_{1}\delta/\epsilon\text{ for some }0\le j<m\:.\label{eq:some j}
\end{equation}
\end{lemma}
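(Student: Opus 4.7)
The plan is to exploit the block-Bernoulli structure of $\nu$: by construction $\nu$ is $\sigma^{m}$-invariant but typically not $\sigma$-invariant, which is precisely why the statement allows a shift by some phase $j\in\{0,\ldots,m-1\}$. I would compute $\sigma^{j}\nu[\omega|_{nm}]$ explicitly for each $j$ using the product structure, express the answer in terms of $f$ and the defect indicator $1_{\{(\sigma^{l}\omega)|_{m}\notin\mathcal{W}\}}$, and then use condition \eqref{Omega0} of the definition of $\Omega_{0}$ together with a pigeonhole argument over $j$ to extract a good phase.

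First, I would align the window $[j,j+nm)$ with the block partition $\{[km,(k+1)m)\}_{k\ge 0}$. For $j\ge 1$ this window contains a partial block of length $m-j$ at the start, $n-1$ full blocks $v_{k}^{(j)}:=(\sigma^{km-j}\omega)|_{m}$ for $k=1,\ldots,n-1$, and a partial block of length $j$ at the end; the case $j=0$ gives no partial blocks and $n$ full blocks $v_{k}^{(0)}:=(\sigma^{km}\omega)|_{m}$. Since $p_{w}\ge c\cdot 2^{-m/\epsilon}$ for every $w\in\Lambda^{m}$ (once $\epsilon$ is small enough that $\epsilon^{-1}>h+\delta$), each partial block contributes at most $m/\epsilon+O(1)$ to $-\log\sigma^{j}\nu[\omega|_{nm}]$. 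A direct case check using that $f$ vanishes off $\mathcal{W}$ yields the clean identity
\[
-\log p_{v_{k}^{(j)}} \;=\; -\log c + m\,f(\sigma^{km-j}\omega) + \tfrac{m}{\epsilon}\,1_{\{(\sigma^{km-j}\omega)|_{m}\notin\mathcal{W}\}}.
\]

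The crucial combinatorial observation is that as $(j,k)$ ranges over the relevant index set (with $k\in\{0,\ldots,n-1\}$ when $j=0$ and $k\in\{1,\ldots,n-1\}$ otherwise), the integer $km-j$ hits each element of $\{0,1,\ldots,(n-1)m\}$ exactly once. Summing the two preceding displays over $j$ therefore turns the $k$-sums into the orbit sums $\sum_{l=0}^{(n-1)m}$, which are bounded by the full orbit sums $\sum_{l=0}^{nm-1}$ controlled in condition \eqref{Omega0}. Pigeonhole then singles out some $j$ for which the averaged estimate holds, and combining this with the partial-block and constant contributions and dividing by $nm$ gives
\[
-\tfrac{1}{nm}\log\sigma^{j}\nu[\omega|_{nm}] \;\le\; h + 2\delta(1+\epsilon^{-1}) + O\!\bigl(\tfrac{1}{n\epsilon}\bigr) + O\!\bigl(\tfrac{1}{m}\bigr).
\]
Because $N$ is chosen large with respect to all previous parameters and $m$ is large with respect to $\delta$, the two error terms are $o(\delta/\epsilon)$, and the inequality $1+\epsilon^{-1}\le 2\epsilon^{-1}$ for $\epsilon\le 1$ delivers the conclusion with any global constant $c_{1}>4$. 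The only substantive point is the combinatorial identification of indices in the third step; everything else is routine bookkeeping, provided one handles the $j=0$ case (no partial blocks, $n$ full blocks) separately from $j\ge 1$.
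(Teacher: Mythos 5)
Your proposal is correct and follows essentially the same route as the paper: the same decomposition of $\sigma^{j}\nu[\omega|_{nm}]$ into two partial blocks (controlled via $p_{w}\ge c\,2^{-m\epsilon^{-1}}$) and $n-1$ full blocks, the same exact identity expressing $-\log p_{(\sigma^{km-j}\omega)|_{m}}$ in terms of $f$ and the indicator of $\mathcal{W}^{c}$, and the same pigeonhole over the phase $j$ obtained by partitioning the orbit sum in condition \eqref{Omega0} into $m$ subsums. Your explicit verification that the indices $km-j$ exhaust $\{0,1,\ldots,(n-1)m\}$ exactly once just makes precise what the paper states as ``partitioning \eqref{Omega0} into $m$ sums,'' so no further comment is needed.
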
 \begin{proof} Let $\omega\in\Omega_{0}$ and $n\ge N$,
then by partitioning \eqref{Omega0} into $m$ sums we can see there
must exist $0\le j<m$ such that 
\begin{equation}
\frac{1}{n}\sum_{k=1}^{n-1}f(\sigma^{km-j}\omega)+\frac{1}{\epsilon n}\sum_{k=1}^{n-1}1_{\{(\sigma^{km-j}\omega)|_{m}\notin\mathcal{W}\}}\le h+2\delta(1+\epsilon^{-1})\:.\label{eq:some j ergodic avg}
\end{equation}
By the definition of $\nu$, 
\begin{equation}
\sigma^{j}\nu[\omega|_{nm}]=\nu(\sigma^{-j}[\omega|_{m-j}])\cdot\left(\prod_{k=1}^{n-1}\nu[(\sigma^{km-j}\omega)|_{m}]\right)\cdot\nu[(\sigma^{nm-j}\omega)|_{j}]\:.\label{eq:as product}
\end{equation}
Since $p_{w}\ge c2^{-m\epsilon^{-1}}$ for every $w\in\Lambda^{m}$
we may assume that $N$ is sufficiently large so that, 
\[
-\frac{1}{nm}\log\nu(\sigma^{-j}[\omega|_{m-j}])-\frac{1}{nm}\log\nu[(\sigma^{nm-j}\omega)|_{j}]\le\delta/2\:.
\]
From this, (\ref{eq:some j ergodic avg}), (\ref{eq:as product})
and $c\ge1/2$, we now get\par 
\begin{eqnarray*}
-\frac{1}{nm}\log\sigma^{j}\nu[\omega|_{nm}] & \le & -\frac{1}{nm}\sum_{k=1}^{n-1}\log\nu[(\sigma^{km-j}\omega)|_{m}]+\delta/2\\
 & \le & -\frac{1}{nm}\sum_{k=1}^{n-1}\log(p_{(\sigma^{km-j}\omega)|_{m}}/c)+\delta\\
 & = & \frac{1}{n}\sum_{k=1}^{n-1}f(\sigma^{km-j}\omega)+\frac{1}{\epsilon n}\sum_{k=1}^{n-1}1_{\{(\sigma^{km-j}\omega)|_{m}\notin\mathcal{W}\}}+\delta\\
 & \le & h+3\delta(1+\epsilon^{-1}),
\end{eqnarray*}
which completes the proof of the lemma. \end{proof}
We are now ready to complete the proof of the Theorem. For a Borel set $E\subset\Omega$ write 
$\mu_{0}(E)=\frac{\mu(E\cap\Omega_{0})}{\mu(\Omega_{0})}$.
Since $\Pi\mu_{0}\ll\Pi\mu$, it follows by \cite[Theorem 2.12]{Mat} that for $\Pi\mu_{0}$-a.e. $x\in\mathbb{R}$ the limit
\[
\underset{\eta\downarrow0}{\lim}\:\frac{\Pi\mu_{0}(B(x,\eta))}{\Pi\mu(B(x,\eta))}\
\]
exists, and it is positive and finite. Thus, since $\Pi\mu$ is exact dimensional, the same goes for $\Pi\mu_{0}$ with,
\[
\dim\Pi\mu_{0}=\dim\Pi\mu<\beta-\epsilon\:.
\]

Let $n\ge N$ and $x\in\mathbb{R}$ be with, 
\[
\frac{\log\Pi\mu_{0}(B(x,2^{nm\chi}))}{nm\chi}<\beta-\epsilon\:.
\]
Write, 
\[
\mathcal{U}=\{w\in\Lambda^{nm}\::\:[w]\cap\Pi^{-1}(B(x,2^{nm\chi}))\ne\emptyset\text{ and }\mu_{0}[w]>0\}\:.
\]
Since $\mu(\Omega_{0})>1/2$, 
\begin{equation}
2^{nm\chi(\beta-\epsilon)}<\Pi\mu_{0}(B(x,2^{nm\chi}))\le\sum_{w\in\mathcal{U}}\mu_{0}[w]\le2\sum_{w\in\mathcal{U}}\mu[w]\:.\label{eq:lb on sum of mass of cyl}
\end{equation}
For each $w\in\mathcal{U}$ we have $\mu_{0}[w]>0$, hence $\Omega_{0}\cap[w]\ne\emptyset$,
and so $\mu[w]<2^{-nm(h-\delta)}$. From this and (\ref{eq:lb on sum of mass of cyl})
we get, 
\[
2^{nm\chi(\beta-\epsilon)}<2^{1-nm(h-\delta)}\cdot|\mathcal{U}|\:.
\]

For $0\le j<m$ write, 
\[
\mathcal{U}_{j}=\{w\in\mathcal{U}\::\:\sigma^{j}\nu[w]\ge\exp(-nm(h+c_{1}\delta/\epsilon))\}\:.
\]
From (\ref{eq:some j}) and $n\ge N$, and since $\Omega_{0}\cap[w]\ne\emptyset$
for each $w\in\mathcal{U}$, it follows that $\mathcal{U}=\cup_{j=0}^{m-1}\mathcal{U}_{j}$.
Hence there exists $0\le j<m$ with 
\begin{equation}
|\mathcal{U}_{j}|\ge|\mathcal{U}|/m>2^{nm\chi(\beta-\epsilon)}\cdot2^{nm(h-\delta)}\cdot\frac{1}{2m}\:.\label{eq:lb on card U_j}
\end{equation}

Without loss of generality we may assume that $\mathrm{diam}(K)\le1$.
Given $w\in\mathcal{U}_{j}$ we have $\Pi[w]\cap B(x,2^{nm\chi})\ne\emptyset$.
Since $\Omega_{0}\cap[w]\ne\emptyset$, 
\[
\mathrm{diam}(\Pi[w])=\mathrm{diam}(\varphi_{w}(K))\le|r_{w}|<2^{nm(\chi+\delta)},
\]
which implies $[w]\subset\Pi^{-1}(B(x,2^{nm(\chi+2\delta)}))$. Hence,
by the definition of $\mathcal{U}_{j}$, 
\[
\Pi\sigma^{j}\nu(B(x,2^{nm(\chi+2\delta)}))\ge\sigma^{j}\nu(\cup_{w\in\mathcal{U}_{j}}[w])\ge|\mathcal{U}_{j}|\cdot\exp(-nm(h+c_{1}\delta/\epsilon))\:.
\]
From this and (\ref{eq:lb on card U_j}), 
\[
\Pi\sigma^{j}\nu(B(x,2^{nm(\chi+2\delta)}))\ge\frac{1}{2m}\exp\left(nm(\chi(\beta-\epsilon)-O(\delta/\epsilon))\right)\:.
\]

On the other hand, by (\ref{eq:all j mass of balls small}) and by
assuming that $n$ is large enough, 
\[
\Pi\sigma^{j}\nu(B(x,2^{nm(\chi+2\delta)}))\le\exp(nm(\chi+2\delta)(1-\delta)\alpha)\:.
\]
Hence 
\[
\frac{1}{2m}\exp\left(nm(\chi(\beta-\epsilon)-O(\delta/\epsilon))\right)\le\exp(nm(\chi+2\delta)(1-\delta)\alpha),
\]
and so by taking logarithm on both sides, dividing by $nm\chi$, and
letting $n$ tend to $\infty$, we get 
\[
\beta-\epsilon+O(\delta/\epsilon)\ge(1+2\delta/\chi)(1-\delta)\alpha\:.
\]
Now by (\ref{eq:lb of tau}) and since this holds for every $0\le\alpha<\min\{\frac{\tau}{q-1},1\}$,
\begin{equation}
\beta-\epsilon+O(\delta/\epsilon)\ge(1+2\delta/\chi)(1-\delta)\min\{\frac{h}{-\chi}-O(\delta),1\}\:.\label{eq:contradiction}
\end{equation}
Recall that $\delta$ is arbitrarily small with respect to $\epsilon$
and that $\beta=\min\{1,\frac{h}{-\chi}\}$. Hence (\ref{eq:contradiction})
gives a contradiction, and so we must have $\dim\Pi\mu\ge\beta$.
Since it always holds that $\dim\Pi\mu\le\beta$ (see section 3 of
\cite{SSU} or Theorem 2.8 in \cite{FH} for details of how to prove
this), this completes the proof of Theorem \ref{thm:main}.

\section{Convolutions of ergodic measures}

In this section we show how to use the ideas from the proof of Theorem
\ref{thm:main} to prove a result on the convolution of ergodic measures.

For $i=1,2$ let $\Phi_{i}=\{\varphi_{\lambda,i}(x)=r_{i}x+a_{\lambda,i}\}_{\lambda\in\Lambda_{i}}$
be a homogeneous self-similar IFS on $\mathbb{R}$, write $\Omega_{i}=\Lambda_{i}^{\mathbb{N}}$,
let $\Pi_{i}:\Omega_{i}\rightarrow\mathbb{R}$ be the coding map for
$\Phi_{i}$, let $\sigma_{i}:\Omega_{i}\rightarrow\Omega_{i}$ be
the left shift, let $\mu_{i}$ be a $\sigma_{i}$-invariant and ergodic
probability measure on $\Omega_{i}$, and write $h_{i}$ for the entropy
of $\mu_{i}$. We also write $\theta$ for the convolution $\Pi_{1}\mu_{1}*\Pi_{2}\mu_{2}$.

Recall that in Section \ref{sec:Introduction-and-statement} a distance
$d$ was defined between affine maps from $\mathbb{R}$ to $\mathbb{R}$.
We say that $\Phi_{1},\Phi_{2}$ are jointly exponentially separated
if there exist $c>0$ and an increasing sequence $\{n_{j}\}_{j\ge1}\subset\mathbb{N}$
such that,
\[
d(\varphi_{w_{1},i},\varphi_{w_{2},i})\ge c^{n_{j}}\text{ for }i=1,2,\:j\ge1\text{ and }w_{1},w_{2}\in\Lambda_{i}^{n_{j}}\text{ with }w_{1}\ne w_{2}\:.
\]
\begin{thm}\label{thm:conv} Suppose that $\log r_{1}/\log r_{2}\notin\mathbb{Q}$
and that $\Phi_{1},\Phi_{2}$ are jointly exponentially separated.
Then $\theta$ is exact dimensional and, 
\[
\dim\theta=\min\{1,\frac{h_{1}}{-\log r_{1}}+\frac{h_{2}}{-\log r_{2}}\}\:.
\]
\end{thm}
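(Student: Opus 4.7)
The upper bound $\dim\theta\le\beta:=\min\{1,h_1/(-\log r_1)+h_2/(-\log r_2)\}$ is standard: each $\Pi_i\mu_i$ is exact dimensional of dimension $h_i/(-\log r_i)$ by \cite[Theorem 2.8]{FH}, and the upper local dimension of a convolution of such measures on $\mathbb{R}$ is bounded by the sum of the individual dimensions and by $1$; exact dimensionality of $\theta$ itself will be a byproduct of the matching lower bound argument. For the lower bound I would run the scheme of Theorem \ref{thm:main} on both factors simultaneously. Assume $\dim\theta<\beta-\epsilon$ for contradiction, fix $\delta\ll\epsilon$, take $m$ large, and for each $i=1,2$ construct a Bernoulli measure $\nu_i$ on $\Omega_i$ from a level-$m$ probability vector $p^{(i)}=(p_w^{(i)})_{w\in\Lambda_i^m}$ that is proportional to $\mu_i[w]$ on an entropy-typical set $\mathcal{W}_i\subset\Lambda_i^m$ and equals a flat value $c_i\,2^{-m/\epsilon}$ (with $c_i$ a normalising constant) otherwise. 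Setting $q=\delta^{-1}$ and defining $\tau_i$ via $\sum_{w\in\Lambda_i^m}(p_w^{(i)})^q|r_i|^{-m\tau_i}=1$, the calculation underlying \eqref{eq:lb of tau} gives $\tau_i/(q-1)\ge h_i/(-\log r_i)-O(\delta)$ for each $i$.

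The essential new input is a convolution form of \cite[Theorem 6.6]{Sh}. Since $\Phi_1,\Phi_2$ are jointly exponentially separated, the level-$m$ IFSs $\{\varphi_{w,i}\}_{w\in\Lambda_i^m}$ inherit the same property (as in Lemma \ref{nthlevel}); together with $\log r_1/\log r_2\notin\mathbb{Q}$, this allows Shmerkin's $L^q$ dimension estimate for convolutions of self-similar measures (the same mechanism underlying \cite[Theorem 1.3]{HS}) to be applied, yielding that the $L^q$ dimension of $\Pi_1\nu_1*\Pi_2\nu_2$ equals $\min\{1,\tau_1/(q-1)+\tau_2/(q-1)\}$. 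Feeding this into \cite[Lemma 1.7]{Sh} and absorbing shifts exactly as in the lemma containing \eqref{eq:all j mass of balls small} produces, uniformly in $0\le j_1,j_2<m$, all small $\eta$, and all $x\in\mathbb{R}$,
\[
(\Pi_1\sigma_1^{j_1}\nu_1*\Pi_2\sigma_2^{j_2}\nu_2)(B(x,\eta))\le\eta^{(1-\delta)\alpha}
\]
for any prescribed $\alpha<\min\{1,\tau_1/(q-1)+\tau_2/(q-1)\}$.

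For the contradiction, given a target scale $\eta$ pick positive integers $n_1,n_2$ with $|r_i|^{n_im}\approx\eta$, work with the product ergodic measure $\mu_1\otimes\mu_2$ on $\Omega_1\times\Omega_2$, and form a Birkhoff--Egorov good set $\Omega_0$ of mass $>1/2$ on which the cylinder masses and contractions at the two levels $n_1m$ and $n_2m$ are simultaneously controlled. Counting pairs $(w_1,w_2)\in\Lambda_1^{n_1m}\times\Lambda_2^{n_2m}$ for which some $(\omega_1,\omega_2)\in([w_1]\times[w_2])\cap\Omega_0$ satisfies $\Pi_1\omega_1+\Pi_2\omega_2\in B(x,\eta)$, combined with the assumed lower bound $\theta(B(x,\eta))\ge\eta^{\beta-\epsilon}$ and the upper bounds $\mu_i[w_i]\le 2^{-n_im(h_i-\delta)}$ on such cylinders, produces at least roughly $\eta^{\beta-\epsilon}\cdot\exp(n_1m(h_1-\delta)+n_2m(h_2-\delta))$ such pairs. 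A bi-shift variant of Lemma \ref{nergodic} then yields $(j_1,j_2)\in\{0,\dots,m-1\}^2$ such that $\sigma_1^{j_1}\nu_1[w_1]\cdot\sigma_2^{j_2}\nu_2[w_2]\gtrsim\exp(-n_1m(h_1+O(\delta/\epsilon))-n_2m(h_2+O(\delta/\epsilon)))$ on at least an $m^{-2}$-fraction of these pairs, so the resulting lower bound on $(\Pi_1\sigma_1^{j_1}\nu_1*\Pi_2\sigma_2^{j_2}\nu_2)(B(x,C\eta))$ contradicts the $L^q$ upper bound above after taking logarithms, dividing by $\log\eta$, and letting $\eta\to 0$ and then $\delta\to 0$.

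The principal obstacle is the genuinely two-scale bookkeeping: because $|r_1|\ne|r_2|$, the Birkhoff averaging on each coordinate must be carried out at its own rate $n_im$ while both rates are matched to the ambient ball scale $\eta$, and the ordinary product ergodic theorem has to be combined with a bi-shift averaging to handle the possible non-ergodicity of $\mu_i$ under $\sigma_i^m$. The hypothesis $\log r_1/\log r_2\notin\mathbb{Q}$ does not intervene in this bookkeeping directly but is precisely what permits the $L^q$ convolution bound to be invoked via Shmerkin's framework.
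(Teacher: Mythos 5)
Your overall architecture is the one the paper uses: Bernoulli discretisations $\nu_i$ at level $m$, Shmerkin's $L^q$ theorem for convolutions of homogeneous self-similar measures (this is \cite[Theorem 7.2]{Sh}, which is exactly the ``convolution form'' you postulate; the hypotheses are joint exponential separation of the level-$m$ systems plus $\log r_1/\log r_2\notin\mathbb{Q}$), a uniform Frostman bound after deconvolving the shifts, and a two-rate cylinder count at levels $n_1m,n_2m$ matched to the ball scale. Two steps, however, are glossed over in a way that would break the argument as written. First, when you ``absorb shifts exactly as in the lemma containing \eqref{eq:all j mass of balls small}'', applying $\varphi_{u_1,1}^{-1}$ to one factor and $\varphi_{u_2,2}^{-1}$ to the other introduces a relative dilation $r_1^{j_1}r_2^{-j_2}$ between the two coordinates, so the $L^q$ bound you need is not for $\Pi_1\nu_1*\Pi_2\nu_2$ alone but for the whole finite family $\xi_t=\Pi_1\nu_1*S_t\Pi_2\nu_2$ with $t\in\{r_1^{j_1}r_2^{-j_2}:0\le j_1,j_2<m\}$. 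This is why the paper states the $L^q$ estimate with the scaling $S_t$ built in; Shmerkin's theorem is uniform in $t>0$, so the repair is available, but it must be invoked in that form.

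Second, and more seriously, your counting step restricts to cylinders meeting the good set $\Omega_0$ while using the lower bound $\theta(B(x,\eta))\ge\eta^{\beta-\epsilon}$ for the \emph{unrestricted} measure: the mass of $B(x,\eta)$ could in principle be carried entirely by the complement of $\Omega_0$, on which the bound $\mu_i[w_i]\le 2^{-n_im(h_i-\delta)}$ is unavailable, so no lower bound on the number of good pairs follows. You must therefore run the argument for the restricted convolution $\theta_0=\Pi_1\mu_{0,1}*\Pi_2\mu_{0,2}$, and here the device from Theorem \ref{thm:main} does not transfer: there one had $\Pi\mu_0\ll\Pi\mu$ with $\Pi\mu$ already known to be exact dimensional, so a density-point argument gave $\dim\Pi\mu_0=\dim\Pi\mu$ and a good set of measure $>1/2$ sufficed. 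For the convolution, $\theta$ is not known to be exact dimensional in advance (that is part of the conclusion), so the paper instead works with the lower Hausdorff dimension $\dim_H$, proves it is upper semicontinuous in total variation (Lemma \ref{lemma:upper semi cont}), and takes the good sets $\Omega_{0,i}$ of measure $1-O(\delta)$ so that $d_{TV}(\theta,\theta_0)=O(\delta)$ and hence $\dim_H\theta_0\le\dim_H\theta+\epsilon/2$. With your good set of measure only $>1/2$ this step fails: a set $E$ with $\dim_H E<\beta-\epsilon$ and $\theta(E)>0$ may have $\theta_0(E)=0$. The fix is simply to beef up the good-set measure to $1-O(\delta)$ and add the semicontinuity lemma, after which your outline coincides with the paper's proof.
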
 In the case of self-similar measures the theorem follows
almost directly from \cite[Theorem 7.2]{Sh}, which is the main ingredient
of our proof. In \cite[Theorem 1.3]{HS} the above result is shown
for systems $\Phi_{i}$ of the form
\[
\{\varphi_{\lambda,i}(x)=x/n_{i}+\lambda t_{i}/n_{i}\}_{\lambda=0}^{n_{i}-1},
\]
where $t_{1},t_{2}>0$ are real and $n_{1},n_{2}$ are positive integers
with $\log n_{1}/\log n_{2}\notin\mathbb{Q}$. Such systems are clearly
jointly exponentially separated (in fact they satisfy the more restrictive
open set condition).

\subsection*{Preparations for the proof of Theorem \ref{thm:conv}}

Given a Borel probability measure $\zeta$ on $\mathbb{R}$ write $\dim_{H}\zeta$ and $\dim_{P}^{*}\zeta$ for its lower Hausdorff and upper packing dimensions. That is,
\[
\dim_{H}\zeta=\sup\{s\ge0\::\:\underset{\eta\downarrow0}{\liminf}\:\frac{\log\zeta(B(x,\eta))}{\log\eta}\ge s\text{ for }\zeta\text{-a.e. }x\in\mathbb{R}\}
\]
and
\[
\dim_{P}^{*}\zeta=\inf\{s\ge0\::\:\underset{\eta\downarrow0}{\limsup}\:\frac{\log\zeta(B(x,\eta))}{\log\eta}\le s\text{ for }\zeta\text{-a.e. }x\in\mathbb{R}\}\:.
\]
Clearly $\dim_{H}\zeta\le\dim_{P}^{*}\zeta$, and $\zeta$ has exact dimension $s$ if and only if $s=\dim_{H}\zeta=\dim_{P}^{*}\zeta$.
Given a Borel set $E\subset\mathbb{R}$ denote its Hausdorff dimension by $\dim_{H}E$. It is well known that,
\begin{equation}
\dim_{H}\zeta=\inf\{\dim_{H}E\::\:E\subset\mathbb{R}\text{ is Borel and }\zeta(E)>0\}\:.\label{eq:alt def of dim_H}
\end{equation}
For further details on these notions see \cite[Section 10]{Fa}.

Recall that the total variation distance between Borel probability
measures $\zeta_{1},\zeta_{2}$ on $\mathbb{R}$ is defined by,
\[
d_{TV}(\zeta_{1},\zeta_{2})=\sup\{|\zeta_{1}(E)-\zeta_{2}(E)|\::\:E\subset\mathbb{R}\text{ is Borel}\}\:.
\]

\begin{lemma}\label{lemma:upper semi cont} 
The function which takes a probability measure $\zeta$ on $\mathbb{R}$
to $\dim_{H}\zeta$ is upper semicontinuous with respect to the total
variation distance.
\end{lemma}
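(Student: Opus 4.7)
The plan is to reduce the statement to the set-theoretic characterization of $\dim_H\zeta$ displayed in (\ref{eq:alt def of dim_H}). Upper semicontinuity in total variation amounts to the inequality
\[
\limsup_{n\to\infty}\dim_H\zeta_n\le\dim_H\zeta
\]
whenever $d_{TV}(\zeta_n,\zeta)\to 0$, so it suffices to establish this.

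First, I would fix $\varepsilon>0$ and apply (\ref{eq:alt def of dim_H}) to the limit measure $\zeta$ in order to produce a Borel witness set $E\subset\mathbb{R}$ with $\zeta(E)>0$ and $\dim_H E<\dim_H\zeta+\varepsilon$. Next, I would transfer positivity of mass across the total variation limit: since $|\zeta_n(E)-\zeta(E)|\le d_{TV}(\zeta_n,\zeta)\to 0$, we have $\zeta_n(E)\ge\tfrac{1}{2}\zeta(E)>0$ for every sufficiently large $n$. Applying (\ref{eq:alt def of dim_H}) a second time, now to $\zeta_n$ with the same set $E$, yields $\dim_H\zeta_n\le\dim_H E<\dim_H\zeta+\varepsilon$ for all such $n$. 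Taking $\limsup$ in $n$ and then letting $\varepsilon\downarrow 0$ closes the argument.

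I do not anticipate any substantial obstacle; the whole argument is essentially three lines once (\ref{eq:alt def of dim_H}) is available. The one point worth pausing on is the direction of the inequality: restricting a measure to a piece of positive mass can only increase its lower Hausdorff dimension, and total variation convergence is exactly what lets the single test set $E$ chosen for $\zeta$ continue to witness the infimum for every $\zeta_n$ with $n$ large. This is why the bound emerges in the upper-semicontinuous direction and why the argument would not carry over verbatim to, say, weak convergence, where positivity of mass on a fixed Borel set need not be preserved.
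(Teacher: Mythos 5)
Your argument is correct and is essentially identical to the paper's: both proofs fix a Borel witness set $E$ with $\zeta(E)>0$ and $\dim_{H}E$ close to $\dim_{H}\zeta$ via (\ref{eq:alt def of dim_H}), use total variation convergence to keep $\zeta_{n}(E)>0$, and then apply (\ref{eq:alt def of dim_H}) again to bound $\dim_{H}\zeta_{n}$ by $\dim_{H}E$. The only cosmetic difference is that you phrase upper semicontinuity sequentially while the paper phrases it via a total-variation neighbourhood.
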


\begin{proof}
Let $\zeta$ be a probability measure on $\mathbb{R}$ and let $s>\dim_{H}\zeta$.
By (\ref{eq:alt def of dim_H}) there exists a Borel set $E\subset\mathbb{R}$
with $\zeta(E)>0$ and $\dim_{H}E<s$. Now suppose that $\xi$ is
another probability measure on $\mathbb{R}$ with $d_{TV}(\zeta,\xi)<\zeta(E)$.
Then,
\[
\xi(E)>\zeta(E)-d_{TV}(\zeta,\xi)>0,
\]
and so by (\ref{eq:alt def of dim_H}),
\[
\dim_{H}\xi\le\dim_{H}E<s\:.
\]
This completes the proof of the lemma.
\end{proof}

\subsection*{Proof of Theorem \ref{thm:conv}}

We let,
\[
\beta=\min\{1,\frac{h_{1}}{-\log r_{1}}+\frac{h_{2}}{-\log r_{2}}\}.
\]
By Theorem \ref{thm:main} it follows that $\Pi_{i}\mu_{i}$ has exact dimension $\min\{1,\frac{h_{i}}{-\log r_{i}}\}$ for $i=1,2$. Thus, it is easy to see that $\Pi_{1}\mu_{1}\times\Pi_{2}\mu_{2}$ has exact dimension,
\[
\min\{1,\frac{h_{1}}{-\log r_{1}}\}+\min\{1,\frac{h_{2}}{-\log r_{2}}\}\:.
\]
Now since $\theta$ is a linear projection of $\Pi_{1}\mu_{1}\times\Pi_{2}\mu_{2}$,
\[
\dim_{P}^{*}\theta\le\min\{1,\dim(\Pi_{1}\mu_{1}\times\Pi_{2}\mu_{2})\}=\beta\:.
\]
Thus it suffices to prove that $\dim_{H}\theta\ge\beta$. Assume by contradiction
that $\dim_{H}\theta<\beta$. Let $0<\epsilon<\beta-\dim_{H}\theta$
be small in a manner depending on $\Phi_{i}$ and $\mu_{i}$, let
$\delta>0$ be small with respect to $\epsilon$, and let $m\ge1$
be large with respect to $\delta$.

For $i=1,2$ write, 
\[
\mathcal{W}_{i}=\{w\in\Lambda_{i}^{m}\::\:2^{-m(h_{i}+\delta)}\le\mu_{i}[w]\le2^{-m(h_{i}-\delta)}\}\:.
\]
We may assume that, 
\begin{equation}
\mu_{i}(\cup_{w\in\mathcal{W}_{i}}[w])>1-\delta\:.\label{eq:mass of good cylinders3}
\end{equation}
For $w\in\Lambda_{i}^{m}$ set 
\[
p_{w,i}=\begin{cases}
\mu_{i}[w]\cdot c_{i} & \text{if }w\in\mathcal{W}_{i}\\
2^{-m\epsilon^{-1}}\cdot c_{i} & \text{otherwise}
\end{cases},
\]
where $c_{i}>0$ is chosen so that $\sum_{w\in\Lambda_{i}^{m}}p_{w,i}=1$.
By (\ref{eq:mass of good cylinders3}) it follows that $1/2\le c_{i}\le2$.
Write $p_{i}=(p_{w,i})_{w\in\Lambda_{i}^{m}}$ and let $\nu_{i}$
be the measure on $\Omega_{i}$ with, 
\[
\nu_{i}[w_{1}...w_{l}]=p_{w_{1},i}\cdot...\cdot p_{w_{l},i}\text{ for each }w_{1},...,w_{l}\in\Lambda_{i}^{m}\:.
\]
For $t>0$ and $x\in\mathbb{R}$ set $S_{t}x=tx$ and $\xi_{t}=\Pi_{1}\nu_{1}*S_{t}\Pi_{2}\nu_{2}$.
Write $q$ for $\delta^{-1}$. Given a Borel probability measure $\zeta$
on $\mathbb{R}$ denote by $D(\zeta,q)$ the $L^{q}$ dimension of
$\zeta$. \begin{lemma} There exists a constant $c_{1}\ge1$, which
depends only on $r_{1},r_{2}$, such that 
\begin{equation}
D(\xi_{t},q)>\beta-c_{1}\delta\text{ for all }t>0\:.\label{eq:L^q dim of xi_t}
\end{equation}
\end{lemma}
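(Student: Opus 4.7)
The plan is to mirror the structure of Lemma~2.1, establishing good lower bounds on the $L^{q}$ dimensions of the two components $\Pi_{1}\nu_{1}$ and $S_{t}\Pi_{2}\nu_{2}$ separately, and then apply Shmerkin's convolution result for self-similar measures with irrationally related contractions to assemble the bound for $\xi_{t}$.

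First, for $i=1,2$, let $\tau_{i}>0$ be the unique solution to
\[
\sum_{w\in\Lambda_{i}^{m}}p_{w,i}^{q}|r_{i}|^{-m\tau_{i}}=1.
\]
Since $\Phi_{i}$ is homogeneous, this equation simplifies (the factor $|r_i|^{-m\tau_i}$ comes out), and a direct analogue of the estimates from the proof of Lemma~2.1—using the definition of $\mathcal{W}_{i}$ to bound $\sum_{w\in\mathcal{W}_{i}}p_{w,i}^{q}$ from below and $2^{-m\epsilon^{-1}q}\cdot|\Lambda_{i}|^{m}$ from above on the complement—yields, after choosing $\epsilon$ small, $\delta$ small relative to $\epsilon$, and $m$ large relative to $\delta$,
\[
\frac{\tau_{i}}{q-1}\ge\frac{h_{i}}{-\log r_{i}}-O(\delta).
\]

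Second, observe that $\Pi_{i}\nu_{i}$ is the self-similar measure of the homogeneous IFS $\{\varphi_{w,i}\}_{w\in\Lambda_{i}^{m}}$ with contraction ratio $r_{i}^{m}$ and weights $(p_{w,i})$. By the joint exponential separation of $\Phi_{1},\Phi_{2}$ and the argument of Lemma~\ref{nthlevel}, this $m$-th level IFS still has exponential separation. Thus \cite[Theorem 6.6]{Sh} gives
\[
D(\Pi_{i}\nu_{i},q)=\min\Bigl\{\frac{\tau_{i}}{q-1},1\Bigr\}\ge\min\Bigl\{\frac{h_{i}}{-\log r_{i}},1\Bigr\}-O(\delta).
\]
Moreover, $S_{t}\Pi_{2}\nu_{2}$ is the self-similar measure of the rescaled homogeneous IFS $\{x\mapsto r_{2}^{m}x+t\,a_{w,2}\}_{w\in\Lambda_{2}^{m}}$, which satisfies exponential separation with the same exponent (only the constant depends on $t$), and $L^{q}$ dimension is invariant under the scaling $S_{t}$.

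Third, since $\log r_{1}^{m}/\log r_{2}^{m}=\log r_{1}/\log r_{2}\notin\mathbb{Q}$, the two homogeneous self-similar measures $\Pi_{1}\nu_{1}$ and $S_{t}\Pi_{2}\nu_{2}$ satisfy the hypotheses of Shmerkin's convolution theorem \cite[Theorem 7.2]{Sh}, which yields
\[
D(\xi_{t},q)\ge\min\bigl\{1,\,D(\Pi_{1}\nu_{1},q)+D(S_{t}\Pi_{2}\nu_{2},q)\bigr\}.
\]
Combining with the bounds above and using $\beta=\min\{1,\tfrac{h_{1}}{-\log r_{1}}+\tfrac{h_{2}}{-\log r_{2}}\}$, we obtain $D(\xi_{t},q)\ge\beta-c_{1}\delta$ for a constant $c_{1}\ge 1$ depending only on $r_{1},r_{2}$ (the dependence entering through the constants absorbed in the $O(\delta)$ terms above, which come from ratios involving $\log r_{1}$ and $\log r_{2}$).

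The main obstacle is ensuring that Shmerkin's convolution bound is truly uniform in $t>0$. The key point is that all of the $t$-dependence of the rescaled IFS for $S_{t}\Pi_{2}\nu_{2}$ enters only through its separation constant, which does not affect the $L^{q}$ dimension value, and through rescaling of the measure, under which $L^{q}$ dimension is invariant. Provided \cite[Theorem 7.2]{Sh} is formulated to allow arbitrary translation data (which it is, as the hypothesis there is on the ratios only), the lower bound it produces on $D(\xi_{t},q)$ is independent of $t$, giving the required uniform estimate.
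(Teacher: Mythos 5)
Your proposal is correct and follows essentially the same route as the paper: bound $\Vert p_{i}\Vert_{q}^{q}$ from above, apply Shmerkin's formula for the $L^{q}$ dimension of homogeneous self-similar measures to each factor, note that $D(S_{t}\Pi_{2}\nu_{2},q)=D(\Pi_{2}\nu_{2},q)$, and conclude with Shmerkin's convolution theorem, whose bound is independent of $t$ since the ratio $\log r_{1}/\log r_{2}$ and the dimension values do not change under $S_{t}$. The only inefficiency is your appeal to the full two-sided machinery of Lemma~2.1: in the homogeneous case $\tau_{i}=\log\Vert p_{i}\Vert_{q}^{q}/(m\log|r_{i}|)$ exactly, so the single upper bound $\Vert p_{i}\Vert_{q}^{q}\le\Vert p_{i}\Vert_{\infty}^{q-1}\le\exp(-m(h_{i}-\delta)(q-1))$ already suffices.
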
 \begin{proof} For $i=1,2$ we have 
\[
\Vert p_{i}\Vert_{q}^{q}\le\Vert p_{i}\Vert_{\infty}^{q-1}\sum_{w\in\Lambda_{i}^{m}}p_{w,i}\le\exp(-m(h_{i}-\delta)(q-1))\:.
\]
From this and \cite[Theorem 6.2]{Sh}, 
\[
D(\Pi_{i}\nu_{i},q)=\min\{1,\frac{\log\Vert p_{i}\Vert_{q}^{q}}{(q-1)\log r_{i}^{m}}\}\ge\min\{1,\frac{h_{i}-\delta}{-\log r_{i}}\}\:.
\]
From the fact that $\Phi_{i}$ are jointly exponentially separated
it follows easily that the systems $\{\varphi_{w,i}\}_{w\in\Lambda_{i}^{m}}$
are also jointly exponentially separated. From this and the assumption
$\log r_{1}/\log r_{2}\notin\mathbb{Q}$, by \cite[Theorem 7.2]{Sh},
and since $D(S_{t}\Pi_{2}\nu_{2},q)=D(\Pi_{2}\nu_{2},q)$ for $t>0$,
we get
\begin{eqnarray*}
D(\xi_{t},q) & = & \min\{1,D(\Pi_{1}\nu_{1},q)+D(S_{t}\Pi_{2}\nu_{2},q)\}\\
 & \ge & \min\{1,\frac{h_{1}}{-\log r_{1}}+\frac{h_{2}}{-\log r_{2}}\}-O_{r_{1},r_{2}}(\delta),
\end{eqnarray*}
which completes the proof of the lemma. \end{proof} Fix some $0<\alpha<\beta-c_{1}\delta$.
\begin{lemma} There exists $\eta_{0}>0$, which depends on all previous
parameters, such that for every $0\le j_{1},j_{2}<m$, 
\begin{equation}
\Pi_{1}\sigma_{1}^{j_{1}}\nu_{1}*\Pi_{2}\sigma_{2}^{j_{2}}\nu_{2}(B(x,\eta))\le\eta^{(1-\delta)\alpha}\text{ for all }0<\eta\le\eta_{0}\text{ and }x\in\mathbb{R}\:.\label{eq:all j mass of balls small3}
\end{equation}
\end{lemma}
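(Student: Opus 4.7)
I would follow the template of Lemma 2.3, adapting it to convolutions. First, the previous lemma gives $D(\xi_t, q) > \beta - c_1\delta$ uniformly in $t > 0$, and $\alpha < \beta - c_1\delta$ by assumption. Setting $\alpha' = \frac{1}{2}(\alpha + \beta - c_1\delta)$ and applying \cite[Lemma 1.7]{Sh} with $q = \delta^{-1}$, for each $t > 0$ there is a threshold $\eta_1(t) > 0$ such that $\xi_t(B(y,\eta')) \le (\eta')^{(1-\delta)\alpha'}$ for all $y \in \mathbb{R}$ and $0 < \eta' \le \eta_1(t)$. Only finitely many values of $t$, namely $r_1^{j_1}/r_2^{j_2}$ with $0 \le j_1, j_2 < m$, will be needed below, so we may choose $\eta_1$ uniform in $t$.

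Next, as in Lemma 2.3, I would decompose $\sigma_i^{j_i}\nu_i = \sum_{u \in \Lambda_i^{j_i}} \sigma_i^{j_i}(\nu_i|_{[u]})$ for $i = 1, 2$. Using $\Pi_i\omega = \varphi_{u,i}(\Pi_i\sigma_i^{j_i}\omega)$ for $\omega \in [u]$ with $u \in \Lambda_i^{j_i}$ yields the measure-inequality
\[
\Pi_i\sigma_i^{j_i}(\nu_i|_{[u]}) \le (\varphi_{u,i}^{-1})_{*}\,\Pi_i\nu_i\:.
\]
Since convolution is bilinear and monotone in each factor, it suffices to bound, for each pair $(u_1,u_2) \in \Lambda_1^{j_1} \times \Lambda_2^{j_2}$, the mass of $B(x,\eta)$ under $(\varphi_{u_1,1}^{-1})_{*}\Pi_1\nu_1 * (\varphi_{u_2,2}^{-1})_{*}\Pi_2\nu_2$, and then to sum the resulting estimates over the at most $|\Lambda_1|^m|\Lambda_2|^m$ pairs.

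The main (elementary) step is to identify each such summand with a translated, rescaled copy of $\xi_t$. Setting $t = r_1^{j_1}/r_2^{j_2}$ and factoring out the common scale $1/r_1^{j_1}$, a direct calculation yields
\[
\left(\varphi_{u_1,1}^{-1}\right)_{*}\Pi_1\nu_1 * \left(\varphi_{u_2,2}^{-1}\right)_{*}\Pi_2\nu_2(B(x,\eta)) = \xi_t\!\left(B\!\left(r_1^{j_1}x + a_{u_1,1} + t\,a_{u_2,2},\, r_1^{j_1}\eta\right)\right)\:.
\]
Choosing $\eta_0 \le \eta_1$, the first step bounds this by $(r_1^{j_1}\eta)^{(1-\delta)\alpha'} \le \eta^{(1-\delta)\alpha'}$, and summing over pairs produces $|\Lambda_1|^m|\Lambda_2|^m\,\eta^{(1-\delta)\alpha'}$. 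Taking $\eta_0$ small in terms of $m$, $|\Lambda_i|$, and $\alpha' - \alpha > 0$ absorbs the polynomial prefactor into the exponent to give $\eta^{(1-\delta)\alpha}$. The only real obstacle is bookkeeping: the two shifts $j_1, j_2$ introduce two distinct scales $r_1^{j_1}, r_2^{j_2}$, so the convolution is not a priori of the form $\xi_t$ for a single $t$, and what makes the reduction go through is precisely the uniformity in $t$ of the $L^q$-bound from the previous lemma.
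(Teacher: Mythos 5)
Your proposal is correct and follows essentially the same route as the paper: decompose each $\sigma_i^{j_i}\nu_i$ over the cylinders $[u_i]$ with $u_i\in\Lambda_i^{j_i}$, identify each resulting convolution with a translated and rescaled copy of $\xi_t$ for $t=r_1^{j_1}r_2^{-j_2}$, invoke \cite[Lemma 1.7]{Sh} together with the uniform $L^q$ bound of the previous lemma over the finite set of relevant $t$, and absorb the $|\Lambda_1|^m|\Lambda_2|^m$ prefactor using the gap between $\alpha$ and $\beta-c_1\delta$. The only cosmetic difference is that you introduce an intermediate exponent $\alpha'$ while the paper works directly with $(1-\delta)(\beta-c_1\delta)$; both handle the prefactor identically.
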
 \begin{proof} Write, 
\[
T=\{r_{1}^{j_{1}}r_{2}^{-j_{2}}\::\:0\le j_{1},j_{2}<m\}\:.
\]
By \cite[Lemma 1.7]{Sh}, (\ref{eq:L^q dim of xi_t}), and $q=\delta^{-1}$,
it follows that there exists $\eta_{1}>0$ with, 
\[
\xi_{t}(B(x,\eta))\le\eta^{(1-\delta)(\beta-c_{1}\delta)}\text{ for all }t\in T,\:0<\eta\le\eta_{1}\text{ and }x\in\mathbb{R}\:.
\]
\par Let $\eta_{0}>0$ be small with respect to $\eta_{1}$ and all
previous parameters. Let $0\le j_{1},j_{2}<m$, $0<\eta\le\eta_{0}$,
$x\in\mathbb{R}$, $u_{1}\in\Lambda_{1}^{j_{1}}$, and $u_{2}\in\Lambda_{2}^{j_{2}}$.
Write $b=\varphi_{u_{1},1}\circ\varphi_{u_{2},2}^{-1}(0)$, then 
\begin{multline*}
\Pi_{1}\sigma_{1}^{j_{1}}(\nu_{1}|_{[u_{1}]})*\Pi_{2}\sigma_{2}^{j_{2}}(\nu_{2}|_{[u_{2}]})(B(x,\eta))\\
=\nu_{1}\times\nu_{2}\{(\omega_{1},\omega_{2})\in[u_{1}]\times[u_{2}]\::\:\Pi_{1}\sigma_{1}^{j_{1}}\omega_{1}+\Pi_{2}\sigma_{2}^{j_{2}}\omega_{2}\in B(x,\eta)\}\\
=\nu_{1}\times\nu_{2}\{(\omega_{1},\omega_{2})\in[u_{1}]\times[u_{2}]\::\:\varphi_{u_{1},1}^{-1}\Pi_{1}\omega_{1}+\varphi_{u_{2},2}^{-1}\Pi_{2}\omega_{2}\in B(x,\eta)\}\\
\le\nu_{1}\times\nu_{2}\{(\omega_{1},\omega_{2})\::\:\Pi_{1}\omega_{1}+S_{r_{1}^{j_{1}}r_{2}^{-j_{2}}}\Pi_{2}\omega_{2}\in B(\varphi_{u_{1},1}x-b,r_{1}^{j_{1}}\eta)\}\\
=\xi_{r_{1}^{j_{1}}r_{2}^{-j_{2}}}(B(\varphi_{u_{1},1}x-b,r_{1}^{j_{1}}\eta))\le\eta^{(1-\delta)(\beta-c_{1}\delta)}\:.
\end{multline*}
Hence,
\begin{eqnarray*}
\Pi_{1}\sigma_{1}^{j_{1}}\nu_{1}*\Pi_{2}\sigma_{2}^{j_{2}}\nu_{2}(B(x,\eta)) & = & \sum_{u_{1}\in\Lambda_{1}^{j_{1}}}\sum_{u_{2}\in\Lambda_{2}^{j_{2}}}\Pi_{1}\sigma_{1}^{j_{1}}(\nu_{1}|_{[u_{1}]})*\Pi_{2}\sigma_{2}^{j_{2}}(\nu_{2}|_{[u_{2}]})(B(x,\eta))\\
 & \le & |\Lambda_{1}|^{m}|\Lambda_{2}|^{m}\eta^{(1-\delta)(\beta-c_{1}\delta)}\le\eta^{(1-\delta)\alpha},
\end{eqnarray*}
which completes the proof of the lemma. \end{proof} For $i=1,2$
and $\omega\in\Omega_{i}$ set 
\[
f_{i}(\omega)=-\frac{1}{m}1_{\mathcal{W}_{i}}(\omega|_{m})\log\mu_{i}[\omega|_{m}],
\]
then $\int f_{i}\:d\mu_{i}\le h_{i}+\delta$. Let $N\ge1$ be large
with respect to all previous parameters. For $n\ge1$ write $n_{i}=\left\lceil \frac{n}{-\log r_{i}}\right\rceil $.
Let $\Omega_{0,i}$ be the set of all $\omega\in\Omega_{i}$ such
that for every $n\ge N$, 
\begin{itemize}
\item $\mu_{i}[\omega|_{n_{i}m}]<2^{-n_{i}m(h_{i}-\delta)}$;
\item $\frac{1}{n_{i}m}\sum_{k=0}^{n_{i}m-1}f_{i}(\sigma_{i}^{k}\omega)+\frac{1}{\epsilon n_{i}m}\sum_{k=0}^{n_{i}m-1}1_{\{(\sigma_{i}^{k}\omega)|_{m}\notin\mathcal{W}_{i}\}}\le h_{i}+2\delta(1+\epsilon^{-1})\:.$
\end{itemize}
By (\ref{eq:mass of good cylinders3}), the fact that $\int f_{i}\:d\mu_{i}\le h_{i}+\delta$,
Egorov's Theorem and the ergodicity of $\mu_{i}$, we may assume that
$\mu_{i}(\Omega_{0,i})>1-O(\delta)$. \begin{lemma} There exists
a global constant $c_{2}>1$ such that for $i=1,2$, $\omega\in\Omega_{0,i}$,
and $n\ge N$, 
\begin{equation}
-\frac{1}{n_{i}m}\log\sigma_{i}^{j}\nu_{i}[\omega|_{n_{i}m}]\le h_{i}+c_{2}\delta/\epsilon\text{ for some }0\le j<m\:.\label{eq:some j3}
\end{equation}
\end{lemma}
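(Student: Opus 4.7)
The plan is to replicate the proof of Lemma \ref{nergodic} for each index $i \in \{1,2\}$ separately, with $n$ replaced throughout by $n_i$. Indeed, the statement is the exact analogue of Lemma \ref{nergodic} applied to the IFS $\Phi_i$, the ergodic measure $\mu_i$, the Bernoulli measure $\nu_i$, and the word length $n_i m$; no joint argument between $i=1$ and $i=2$ is required.

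Fix $i \in \{1,2\}$, $\omega \in \Omega_{0,i}$, and $n \ge N$. First I would partition the length-$n_i m$ ergodic average from the second defining property of $\Omega_{0,i}$ into $m$ subsums indexed by $k \bmod m$. An averaging argument then produces some $0 \le j < m$ with
\[
\frac{1}{n_i}\sum_{k=1}^{n_i-1} f_i(\sigma_i^{km-j}\omega) + \frac{1}{\epsilon n_i}\sum_{k=1}^{n_i-1} 1_{\{(\sigma_i^{km-j}\omega)|_m\notin\mathcal{W}_i\}} \le h_i + 2\delta(1+\epsilon^{-1}),
\]
with the negligible boundary terms absorbed into the error.

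Next I would exploit the independent-$m$-block structure of $\nu_i$ to factor
\[
\sigma_i^j\nu_i[\omega|_{n_i m}] = \nu_i(\sigma_i^{-j}[\omega|_{m-j}]) \cdot \left(\prod_{k=1}^{n_i-1}\nu_i[(\sigma_i^{km-j}\omega)|_m]\right) \cdot \nu_i[(\sigma_i^{n_i m-j}\omega)|_j].
\]
Since every length-$m$ weight satisfies $p_{w,i}\ge c_i 2^{-m/\epsilon}$, each of the two boundary factors is at least $c_i 2^{-m/\epsilon}$, so by taking $N$ large with respect to $m$, $\epsilon$, $\delta$ their combined $-\log$ contribution, divided by $n_i m$, is at most $\delta/2$. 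For the middle product, the two-case definition of $p_{w,i}$ and $c_i\ge 1/2$ yield
\[
-\log\nu_i[(\sigma_i^{km-j}\omega)|_m] \le m f_i(\sigma_i^{km-j}\omega) + \frac{m}{\epsilon}\cdot 1_{\{(\sigma_i^{km-j}\omega)|_m\notin\mathcal{W}_i\}} + 1.
\]
Combining these three ingredients gives $-\frac{1}{n_i m}\log\sigma_i^j\nu_i[\omega|_{n_i m}] \le h_i + c_2\delta/\epsilon$ for an absolute constant $c_2>1$.

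The main obstacle is exactly the subtlety that motivated Lemma \ref{nergodic}: $\mu_i$ is ergodic under $\sigma_i$ but not necessarily under $\sigma_i^m$, so one cannot prescribe in advance the alignment of the word $\omega|_{n_i m}$ with the $m$-block structure of $\nu_i$. The averaging step over residues modulo $m$ sidesteps this at the cost of producing an existentially quantified $j$. Beyond this, the convolution setting introduces no genuinely new difficulty, as the two measures $\nu_1,\nu_2$ are handled one index at a time and the distinct scales $n_1,n_2$ play symmetric roles.
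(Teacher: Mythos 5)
Your proposal is correct and follows exactly the route the paper intends: the paper's own proof of this lemma consists of the single sentence that it ``uses exactly the same method as the proof of Lemma \ref{nergodic}'', and your argument is precisely that method carried out for each $i$ separately with $n$ replaced by $n_i$ (averaging over residues mod $m$ to find $j$, factoring $\sigma_i^j\nu_i[\omega|_{n_i m}]$ into $m$-blocks, absorbing the boundary blocks via $p_{w,i}\ge c_i 2^{-m\epsilon^{-1}}$, and bounding the middle product by the ergodic average from the definition of $\Omega_{0,i}$). The details, including the observation that no joint argument between $i=1$ and $i=2$ is needed, all check out.
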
 \begin{proof} The proof uses exactly the same method
as the proof of Lemma \ref{nergodic}. \end{proof} Let us resume
with the proof of the theorem. For $i=1,2$ and a Borel set $E\subset\Omega_{i}$
write $\mu_{0,i}(E)=\frac{\mu_{i}(E\cap\Omega_{0,i})}{\mu_{i}(\Omega_{0,i})}$,
and write $\theta_{0}$ for $\Pi_{1}\mu_{0,1}*\Pi_{2}\mu_{0,2}$.
By Lemma \ref{lemma:upper semi cont} the function which takes a probability measure $\zeta$ on $\mathbb{R}$
to $\dim_{H}\zeta$ is upper semi-continuous with respect to the total
variation distance. From $\mu_{i}(\Omega_{0,i})>1-O(\delta)$
it follows that the total variation distance between $\theta$ and
$\theta_{0}$ is $O(\delta)$. Thus we may assume that, 
\[
\dim_{H}\theta_{0}\le\dim_{H}\theta+\epsilon/2<\beta-\epsilon/2\:.
\]
Let $n\ge N$ and $x\in\mathbb{R}$ be with, 
\[
\frac{\log\theta_{0}(B(x,2^{-nm}))}{-nm}<\beta-\epsilon/2\:.
\]
Let $g:\Omega_{1}\times\Omega_{2}\rightarrow\mathbb{R}$ be with $g(\omega_{1},\omega_{2})=\Pi_{1}\omega_{1}+\Pi_{2}\omega_{2}$,
then $\theta_{0}=g(\mu_{0,1}\times\mu_{0,2})$. Let $\mathcal{U}$
be the set of all pairs of words $(w_{1},w_{2})\in\Lambda^{n_{1}m}\times\Lambda^{n_{2}m}$
such that 
\[
([w_{1}]\times[w_{2}])\cap g^{-1}(B(x,2^{-nm}))\ne\emptyset,
\]
and $\mu_{0,i}[w_{i}]>0$ for $i=1,2$.

Since $\mu_{i}(\Omega_{0,i})>1-O(\delta)>1/2$, 
\begin{multline}
2^{-nm(\beta-\epsilon/2)}<\theta_{0}(B(x,2^{-nm}))=g(\mu_{0,1}\times\mu_{0,2})(B(x,2^{-nm}))\\
\le\sum_{(w_{1},w_{2})\in\mathcal{U}}\mu_{0,1}[w_{1}]\mu_{0,2}[w_{2}]\le4\sum_{(w_{1},w_{2})\in\mathcal{U}}\mu_{1}[w_{1}]\mu_{2}[w_{2}]\:.\label{eq:lb on sum of mass of cyl3}
\end{multline}
For each $(w_{1},w_{2})\in\mathcal{U}$ we have $\mu_{0,i}[w_{i}]>0$
for $i=1,2$, hence $\Omega_{0,i}\cap[w_{i}]\ne\emptyset$, and so
$\mu_{i}[w_{i}]<2^{-n_{i}m(h_{i}-\delta)}$. From this and (\ref{eq:lb on sum of mass of cyl3})
we get, 
\[
2^{-nm(\beta-\epsilon/2)}<\exp(2-n_{1}mh_{1}-n_{2}mh_{2}+\delta m(n_{1}+n_{2}))\cdot|\mathcal{U}|\:.
\]

For $0\le j_{1},j_{2}<m$ write, 
\[
\mathcal{U}_{j_{1},j_{2}}=\{(w_{1},w_{2})\in\mathcal{U}\::\:\sigma_{i}^{j_{i}}\nu_{i}[w_{i}]\ge\exp(-n_{i}m(h_{i}+c_{2}\delta/\epsilon))\text{ for }i=1,2\}\:.
\]
From (\ref{eq:some j3}) and $n\ge N$, and since $\Omega_{0,i}\cap[w_{i}]\ne\emptyset$
for $i=1,2$ and $(w_{1},w_{2})\in\mathcal{U}$, it follows that $\mathcal{U}=\cup_{j_{1},j_{2}=0}^{m-1}\mathcal{U}_{j_{1},j_{2}}$.
Hence there exist $0\le j_{1},j_{2}<m$ with 
\begin{equation}
|\mathcal{U}_{j_{1},j_{2}}|\ge|\mathcal{U}|/m^{2}>\frac{1}{4m^{2}}\exp(n_{1}mh_{1}+n_{2}mh_{2}-nm(\beta-\epsilon/2)-\delta m(n_{1}+n_{2}))\:.\label{eq:lb on card U_j3}
\end{equation}

For $i=1,2$ let $K_{i}$ be the attractor of $\Phi_{i}$. Without
loss of generality we may assume that $\mathrm{diam}(K_{i})\le1$.
Given $(w_{1},w_{2})\in\mathcal{U}_{j_{1},j_{2}}$ we have 
\[
g([w_{1}]\times[w_{2}])\cap B(x,2^{-nm})\ne\emptyset\:.
\]
Also, since $n_{i}=\left\lceil \frac{n}{-\log r_{i}}\right\rceil $,
\begin{multline*}
\mathrm{diam}(g([w_{1}]\times[w_{2}]))=\mathrm{diam}(\Pi_{1}[w_{1}])+\mathrm{diam}(\Pi_{2}[w_{2}])\\
=\mathrm{diam}(\varphi_{w_{1},1}(K_{1}))+\mathrm{diam}(\varphi_{w_{2},2}(K_{2}))\le r_{1}^{n_{1}m}+r_{2}^{n_{2}m}\le2^{1-nm},
\end{multline*}
which implies that 
\[
[w_{1}]\times[w_{2}]\subset g^{-1}(B(x,2^{2-nm}))\:.
\]
Hence, by the definition of $\mathcal{U}_{j_{1},j_{2}}$, 
\begin{eqnarray*}
g(\sigma_{1}^{j_{1}}\nu_{1}\times\sigma_{2}^{j_{2}}\nu_{2})(B(x,2^{2-nm})) & \ge & \sigma_{1}^{j_{1}}\nu_{1}\times\sigma_{2}^{j_{2}}\nu_{2}(\cup_{(w_{1},w_{2})\in\mathcal{U}_{j_{1},j_{2}}}[w_{1}]\times[w_{2}])\\
 & \ge & |\mathcal{U}_{j_{1},j_{2}}|\cdot\exp(-n_{1}mh_{1}-n_{2}mh_{2}-(n_{1}+n_{2})mc_{2}\delta/\epsilon)\:.
\end{eqnarray*}
From this and (\ref{eq:lb on card U_j3}), 
\[
g(\sigma_{1}^{j_{1}}\nu_{1}\times\sigma_{2}^{j_{2}}\nu_{2})(B(x,2^{2-nm}))\ge\frac{1}{4m^{2}}\exp\left(-nm\left(\beta-\epsilon/2+O_{r_{1},r_{2}}(\delta/\epsilon)\right)\right)\:.
\]

On the other hand, by (\ref{eq:all j mass of balls small3}) and by
assuming that $n$ is large enough, 
\[
g(\sigma_{1}^{j_{1}}\nu_{1}\times\sigma_{2}^{j_{2}}\nu_{2})(B(x,2^{2-nm}))\le\exp((2-nm)(1-\delta)\alpha)\:.
\]
Hence 
\[
\frac{1}{4m^{2}}\exp\left(-nm\left(\beta-\epsilon/2+O_{r_{1},r_{2}}(\delta/\epsilon)\right)\right)\le\exp((2-nm)(1-\delta)\alpha),
\]
and so by taking logarithm on both sides, dividing by $-nm$, and
letting $n$ tend to $\infty$, we get 
\[
\beta-\epsilon/2+O_{r_{1},r_{2}}(\delta/\epsilon)\ge(1-\delta)\alpha\:.
\]
Since this holds for every $0<\alpha<\beta-c_{1}\delta$, 
\begin{equation}
\beta-\epsilon/2+O_{r_{1},r_{2}}(\delta/\epsilon)\ge(1-\delta)(\beta-c_{1}\delta)\:.\label{eq:contradiction3}
\end{equation}
Now recall that $\delta$ is arbitrarily small with respect to $\epsilon$,
and so (\ref{eq:contradiction3}) gives a contradiction. Thus we must
have $\dim_{H}\theta\ge\beta$, which completes the proof of the theorem.

\section{Orthogonal projections of ergodic measures}

In this section we show how to use the ideas above in order to prove
a result on the orthogonal projections of ergodic measures. As in
previous sections, the main ingredient in the proof is a result from
\cite{Sh}. 

Let $U$ be a $2\times2$ orthogonal matrix with $U^{n}\ne Id$ for
all $n\ge1$ and let $0<r<1$. Let $\Phi=\{\varphi_{\lambda}(x)=rUx+a_{\lambda}\}_{\lambda\in\Lambda}$
be a self-similar IFS on $\mathbb{R}^{2}$. Suppose that $\Phi$ satisfies
the open set condition. Let $S^{1}$ be the unit circle of $\mathbb{R}^{2}$.
For $z\in S^{1}$ and $y\in\mathbb{R}^{2}$ write $P_{z}y=\left\langle z,y\right\rangle $.
Write $\Omega=\Lambda^{\mathbb{N}}$, let $\sigma:\Omega\rightarrow\Omega$
be the left shift, and let $\Pi:\Omega\rightarrow K$ be the coding
map for $\Phi$. \begin{thm}\label{OP} Let $\mu$ be a $\sigma$-invariant
and ergodic measure on $\Omega$. Write $h$ for the entropy of $\mu$.
Then for every $z\in S^{1}$ the measure $P_{z}\Pi\mu$ is exact dimensional
and 
\[
\dim P_{z}\Pi\mu=\min\{1,\frac{h}{-\log r}\}\:.
\]
\end{thm}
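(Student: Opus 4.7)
The plan is to follow the scheme of Theorem \ref{thm:main} essentially verbatim, with $\Pi$ replaced by $P_z\Pi$ throughout and with the input from Shmerkin's work switched from the $L^q$-dimension formula \cite[Theorem 6.6]{Sh} to the corresponding projection theorem for planar self-similar measures with dense rotation in \cite{Sh}. Fix $z\in S^1$ and set $\beta=\min\{1,h/(-\log r)\}$. The upper bound $\dim_P^* P_z\Pi\mu\le\beta$ is immediate: $P_z$ is $1$-Lipschitz, $\dim_P^*\Pi\mu\le h/(-\log r)$ is the standard entropy/Lyapunov upper bound for self-similar sets, and $P_z\Pi\mu$ lives on $\mathbb{R}$. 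Combined with the lower bound established below this will simultaneously give exact dimensionality and the equality $\dim P_z\Pi\mu=\beta$. Assume therefore, for contradiction, that $\dim_H P_z\Pi\mu<\beta$.

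Choose $0<\epsilon<\beta-\dim_H P_z\Pi\mu$, then $\delta\ll\epsilon$, then $m$ large with respect to $\delta$. Copy the construction from Section 2 using the common contraction $r$: form the good block set $\mathcal{W}\subset\Lambda^m$, the Bernoulli weights $(p_w)_{w\in\Lambda^m}$ and the measure $\nu$ on $\Omega$ by the identical formulas. With $q=\delta^{-1}$ and $\tau>0$ solving $\sum_w p_w^q r^{-m\tau}=1$, the computation yielding (\ref{eq:lb of tau}) gives $\tau/(q-1)\ge h/(-\log r)-O(\delta)$. Now $\Pi\nu$ is a homogeneous planar self-similar measure for the IFS $\{\varphi_w\}_{w\in\Lambda^m}$; its rotational parts are all $U^m$, which still has dense orbit in $SO(2)$, and the IFS inherits the open set condition from $\Phi$. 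Invoking Shmerkin's planar projection theorem in the dense-rotation setting, every orthogonal projection satisfies
\[
D(P_w\Pi\nu,q)\ge\min\Bigl\{1,\frac{\tau}{q-1}\Bigr\}\qquad\text{for all }w\in S^1.
\]
Fix $0<\alpha<\min\{1,\tau/(q-1)\}$ and apply \cite[Lemma 1.7]{Sh} to the $m$ measures $P_{U^j z}\Pi\nu$, $0\le j<m$; taking $\eta_0$ to be the minimum of the resulting thresholds produces a single scale below which $P_{U^j z}\Pi\nu(B(y,\eta))\le\eta^{(1-\delta)\alpha}$ uniformly in $0\le j<m$ and $y\in\mathbb{R}$.

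For $\omega\in[u]$ with $u\in\Lambda^j$ one has $\Pi\sigma^j\omega=\varphi_u^{-1}\Pi\omega$, and orthogonality of $U$ gives
\[
P_z\varphi_u^{-1}(y)=r^{-j}P_{U^j z}(y)-r^{-j}\langle U^j z,\tilde a_u\rangle,
\]
where $\tilde a_u$ denotes the translation part of $\varphi_u$. Changing variables, summing over $u\in\Lambda^j$, and shrinking $\alpha$ slightly to absorb the factor $|\Lambda|^m$ converts the previous pointwise estimate into the exact analogue of (\ref{eq:all j mass of balls small}) for $P_z\Pi\sigma^j\nu$. From here the argument is a direct transcription of the remainder of Section 2, with $\chi$ replaced by $\log r$ and $\Pi$ by $P_z\Pi$ wherever a measure on $\mathbb{R}$ appears: Lemma \ref{nergodic} and the cylinder-counting pigeonhole go through verbatim, and the final contradiction is obtained exactly as in (\ref{eq:contradiction}). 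The main obstacle is securing the correct projection statement in \cite{Sh}, namely a uniform lower bound on $D(P_w\Pi\nu,q)$ valid for \emph{every} $w\in S^1$ and not merely almost every $w$; once that is in hand, the finiteness of the orbit $\{U^j z:0\le j<m\}$ makes the uniform $\eta_0$ automatic and the remaining ergodic-theoretic bookkeeping is identical to that of Theorem \ref{thm:main}.
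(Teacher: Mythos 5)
Your proposal is correct and follows essentially the same route as the paper's (sketched) proof: the same Bernoulli-measure construction, the same appeal to Shmerkin's projection theorem for planar self-similar measures with dense rotations (his Theorem 8.2) to get a lower bound on $D(P_v\Pi\nu,q)$ uniformly over all $v\in S^1$, the same change of variables reducing $P_z\Pi\sigma^j\nu$ to the finitely many projections $P_{U^jz}\Pi\nu$, and the same cylinder-counting endgame. The only cosmetic difference is that you phrase the $L^q$ bound via the exponent $\tau$ as in Theorem \ref{thm:main}, whereas the paper works directly with $\Vert p\Vert_q^q$; in the homogeneous case these are the same quantity.
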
 
In Theorem 1.6 in \cite{HS} the above result is shown
for self-similar measures and it is shown for Gibbs measures in \cite{BJ}. The methods used in \cite{HS} and \cite{BJ} do not seem to adapt to general ergodic measure. However the results in \cite{BJ} do work for Gibbs measures on self-conformal sets as well as on self-similar sets. We do not know how to extend our results to the setting of self-conformal sets.

\subsection*{Sketch of the proof of Theorem \ref{OP}}

The proof is almost identical to the ones given for Theorems \ref{thm:main}
and \ref{thm:conv}, thus we only provide a short sketch.

Let $\beta=\min\{1,\frac{h}{-\log r}\}$, then it suffices to show
that $\dim_{H}P_{z}\Pi\mu\ge\beta$ for all $z\in S^{1}$. Assume
by contradiction that there exists $z\in S^{1}$ with $\dim_{H}P_{z}\Pi\mu<\beta$.
Let $0<\epsilon<\beta-\dim P_{z}\Pi\mu$ be small in a manner depending
on $\Phi$ and $\mu$, let $\delta>0$ be small with respect to $\epsilon$,
and let $m\ge1$ be large with respect to $\delta$.

Next we construct a Bernoulli measure $\nu$ which corresponds to
$\mu$ as in the proof of Theorem \ref{thm:conv}. Namely, write
\[
\mathcal{W}=\{w\in\Lambda^{m}\::\:2^{-m(h+\delta)}\le\mu[w]\le2^{-m(h-\delta)}\},
\]
for $w\in\Lambda^{m}$ set 
\[
p_{w}=\begin{cases}
\mu[w]\cdot c & \text{if }w\in\mathcal{W}\\
2^{-m\epsilon^{-1}}\cdot c & \text{otherwise}
\end{cases}
\]
(where $1/2\le c\le2$ is a normalizing constant), and let $\nu$
be the measure on $\Omega$ with, 
\[
\nu[w_{1}...w_{l}]=p_{w_{1}}\cdot...\cdot p_{w_{l}}\text{ for each }w_{1},...,w_{l}\in\Lambda^{m}\:.
\]

Write $q$ for $\delta^{-1}$, and recall that given a Borel probability
measure $\zeta$ on $\mathbb{R}$ its $L^{q}$ dimension is denoted
by $D(\zeta,q)$.\begin{lemma} There exists a constant $c_{1}\ge1$,
which depends only on $r$, such that 
\begin{equation}
D(P_{v}\Pi\nu,q)>\beta-c_{1}\delta\text{ for all }v\in S^{1}\:.\label{eq:L^q dim of proj of nu2}
\end{equation}
\end{lemma}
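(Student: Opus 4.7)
The plan is to view $\Pi\nu$ as a homogeneous self-similar measure on $\mathbb{R}^{2}$ associated to the $m$-th level IFS $\{\varphi_{w}\}_{w\in\Lambda^{m}}$ with probability vector $p=(p_{w})_{w\in\Lambda^{m}}$, and to apply a uniform lower bound on the $L^{q}$ dimensions of orthogonal projections of such measures from \cite{Sh}. This is the planar analogue of how \cite[Theorem 7.2]{Sh} was used in the proof of Theorem \ref{thm:conv} to bound $D(\xi_{t},q)$ uniformly in $t$.

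First I would verify the hypotheses needed to apply such a result. The system $\{\varphi_{w}\}_{w\in\Lambda^{m}}$ is homogeneous with common linear part $r^{m}U^{m}$; it satisfies the open set condition since $\Phi$ does (any open set witnessing OSC for $\Phi$ also works for its $m$-th iterate); and $U^{m}$ has infinite order, for otherwise $U^{mk}=\mathrm{Id}$ for some $k\ge 1$, contradicting the standing assumption that $U^{n}\ne\mathrm{Id}$ for all $n\ge 1$. The relevant theorem of Shmerkin would then yield
\[
D(P_{v}\Pi\nu,q)\ge\min\{1,\frac{\log\Vert p\Vert_{q}^{q}}{(q-1)\log r^{m}}\}\text{ for all }v\in S^{1}\:.
\]

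Next I would bound $\Vert p\Vert_{q}^{q}$ exactly as in the proofs of Theorems \ref{thm:main} and \ref{thm:conv}. Since $\sum_{w}p_{w}=1$ we have $\Vert p\Vert_{q}^{q}\le\Vert p\Vert_{\infty}^{q-1}$, and from the definition of $p_{w}$ together with $c\le 2$ one obtains $\Vert p\Vert_{\infty}\le 2\cdot 2^{-m(h-\delta)}$, so $\log\Vert p\Vert_{q}^{q}\le(q-1)(1-m(h-\delta))$. Substituting into the displayed inequality and using $\log r^{m}=m\log r$, then choosing $m$ large enough with respect to $\delta$, gives
\[
D(P_{v}\Pi\nu,q)\ge\min\{1,\frac{h-\delta}{-\log r}\}-O_{r}(1/m)\ge\beta-c_{1}\delta
\]
for an appropriate constant $c_{1}\ge 1$ depending only on $r$.

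The main obstacle is locating the precise statement in \cite{Sh} that gives uniform lower bounds on the $L^{q}$ dimensions of all orthogonal projections of a planar homogeneous self-similar measure whose linear part is an irrational rotation and which satisfies the open set condition; its role here is exactly parallel to that of \cite[Theorem 7.2]{Sh} in the previous section. Once this ingredient is in hand, the remainder of the argument is the same elementary calculation used to bound $\tau/(q-1)$ in the proof of Theorem \ref{thm:main}.
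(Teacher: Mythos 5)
Your argument is essentially identical to the paper's proof: the external ingredient you were looking for is \cite[Theorem 8.2]{Sh}, which gives exactly the formula $D(P_{v}\Pi\nu,q)=\min\{1,\frac{\log\Vert p\Vert_{q}^{q}}{(q-1)\log r^{m}}\}$ uniformly over $v\in S^{1}$ for such homogeneous planar self-similar measures, and the rest of your computation (the bound $\Vert p\Vert_{q}^{q}\le\Vert p\Vert_{\infty}^{q-1}$ and the passage to $\beta-c_{1}\delta$) matches the paper. Your extra checks that the OSC and the infinite order of the rotation persist for the $m$-th level system are correct and harmless.
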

\begin{proof}
We have
\[
\Vert p\Vert_{q}^{q}\le\Vert p\Vert_{\infty}^{q-1}\sum_{w\in\Lambda^{m}}p_{w}\le\exp(-m(h-\delta)(q-1))\:.
\]
From this and \cite[Theorem 8.2]{Sh} it follows that for all $v\in S^{1}$,
\[
D(P_{v}\Pi\nu,q)=\min\{1,\frac{\log\Vert p\Vert_{q}^{q}}{(q-1)\log r^{m}}\}\ge\min\{1,\frac{h-\delta}{-\log r}\},
\]
which completes the proof of the lemma.
\end{proof}
Fix some $0<\alpha<\beta-c_{1}\delta$. \begin{lemma} There exists
$\eta_{0}>0$, which depends on all previous parameters, such that
for every $0\le j<m$, 
\begin{equation}
P_{z}\Pi\sigma^{j}\nu(B(x,\eta))\le\eta^{(1-\delta)\alpha}\text{ for all }0<\eta\le\eta_{0}\text{ and }x\in\mathbb{R}\:.\label{eq:all j mass of balls small2}
\end{equation}
\end{lemma}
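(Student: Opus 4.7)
The plan is to mimic the proofs of the analogous lemmas in the previous two sections, the only wrinkle being the rotation $U$. I would first rewrite $P_z \Pi \sigma^j$ on a single cylinder $[u]$ with $u \in \Lambda^j$ in terms of a projection of $\Pi$ in a rotated direction. Since $\Phi$ is homogeneous with contraction $rU$, we have $\varphi_u(y) = r^j U^j y + a_u$, so $\varphi_u^{-1}(y) = r^{-j} U^{-j}(y-a_u)$. Using orthogonality of $U$, namely $\langle z, U^{-j} y\rangle = \langle U^j z, y\rangle$, this gives
\[
P_z \Pi \sigma^j \omega = r^{-j}\, P_{U^j z} \Pi \omega - r^{-j} P_{U^j z} a_u \quad \text{for } \omega \in [u].
\]
Consequently, for $x \in \mathbb{R}$ and $\eta > 0$,
\[
P_z \Pi \sigma^j (\nu|_{[u]})(B(x,\eta)) \le P_{U^j z} \Pi \nu\bigl(B(r^j x + P_{U^j z} a_u, r^j \eta)\bigr).
\]

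Next, since $z$ is fixed and there are only finitely many directions $\{U^j z : 0 \le j < m\}$ to consider, I would apply \cite[Lemma 1.7]{Sh} to each of the measures $P_{U^j z} \Pi \nu$. Writing $\alpha' = \tfrac{1}{2}(\alpha + \beta - c_1 \delta)$, inequality \eqref{eq:L^q dim of proj of nu2} combined with Lemma 1.7 of \cite{Sh} and $q = \delta^{-1}$ yields a single $\eta_1 > 0$ (take the minimum over the $m$ directions) such that
\[
P_{U^j z} \Pi \nu(B(y, \eta)) \le \eta^{(1-\delta)\alpha'} \quad \text{for all } 0 \le j < m,\; y \in \mathbb{R},\; 0 < \eta \le \eta_1.
\]

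Finally, I would choose $\eta_0 > 0$ small with respect to $\eta_1$, $m$, $|\Lambda|$, $r$ and $\alpha' - \alpha$, and sum the cylinder-by-cylinder estimate:
\[
P_z \Pi \sigma^j \nu(B(x,\eta)) = \sum_{u \in \Lambda^j} P_z \Pi \sigma^j (\nu|_{[u]})(B(x,\eta)) \le |\Lambda|^m (r^j \eta)^{(1-\delta)\alpha'} \le \eta^{(1-\delta)\alpha}
\]
for $0 < \eta \le \eta_0$, which is \eqref{eq:all j mass of balls small2}.

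There is no serious obstacle here: the argument is a direct transcription of the corresponding lemmas in the proofs of Theorems \ref{thm:main} and \ref{thm:conv}. The only point that requires any thought is the bookkeeping for the rotation $U^j$; once one observes that $\langle z, U^{-j}\cdot\rangle = \langle U^j z,\cdot\rangle$, one can reduce a ball estimate for $P_z \Pi \sigma^j \nu$ to a ball estimate for $P_v \Pi \nu$ in the rotated direction $v = U^j z$, and the uniform-in-$v$ nature of the bound in \eqref{eq:L^q dim of proj of nu2} ensures the argument goes through without loss.
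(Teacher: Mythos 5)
Your proposal is correct and follows essentially the same route as the paper: decompose over cylinders $[u]\in\Lambda^{j}$, use $\langle z,U^{-j}\cdot\rangle=\langle U^{j}z,\cdot\rangle$ to reduce to a ball estimate for $P_{U^{j}z}\Pi\nu$ of radius $r^{j}\eta$, apply \cite[Lemma 1.7]{Sh} uniformly over the finite set of directions $\{U^{j}z\}_{0\le j<m}$, and absorb the $|\Lambda|^{m}$ factor by shrinking $\eta_{0}$. The only (immaterial) difference is that you interpose $\alpha'=\tfrac{1}{2}(\alpha+\beta-c_{1}\delta)$ as in the proof of Theorem \ref{thm:main}, whereas the paper works directly with the exponent $(1-\delta)(\beta-c_{1}\delta)$ and compares to $(1-\delta)\alpha$ at the end.
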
 \begin{proof} Write, 
\[
T=\{U^{j}z\::\:0\le j<m\}\:.
\]
By \cite[Lemma 1.7]{Sh}, (\ref{eq:L^q dim of proj of nu2}), and
$q=\delta^{-1}$, it follows that there exists $\eta_{1}>0$ with,
\[
P_{v}\Pi\nu(B(x,\eta))\le\eta^{(1-\delta)(\beta-c_{1}\delta)}\text{ for all }v\in T,\:0<\eta\le\eta_{1}\text{ and }x\in\mathbb{R}\:.
\]
\par Let $\eta_{0}>0$ be small with respect to $\eta_{1}$ and all
previous parameters. Let $0\le j<m$, $0<\eta\le\eta_{0}$, $x\in\mathbb{R}$,
and $u\in\Lambda^{j}$. Write $b=\left\langle z,U^{-j}\varphi_{u}(0)\right\rangle $,
then 
\begin{eqnarray*}
P_{z}\Pi\sigma^{j}(\nu|_{[u]})(B(x,\eta)) & = & \nu\{\omega\in[u]\::\:P_{z}\Pi\sigma^{j}\omega\in B(x,\eta)\}\\
 & = & \nu\{\omega\in[u]\::\:P_{z}\varphi_{u}^{-1}\Pi\omega\in B(x,\eta)\}\\
 & = & \nu\{\omega\in[u]\::\:P_{U^{j}z}\Pi\omega\in B(x+b,r^{j}\eta)\}\\
 & \le & P_{U^{j}z}\Pi\nu(B(x+b,r^{j}\eta))\le\eta^{(1-\delta)(\beta-c_{1}\delta)}\:.
\end{eqnarray*}
Hence, 
\[
P_{z}\Pi\sigma^{j}\nu(B(x,\eta))=\sum_{u\in\Lambda^{j}}P_{z}\Pi\sigma^{j}(\nu|_{[u]})(B(x,\eta))\le|\Lambda|^{m}\eta^{(1-\delta)(\beta-c_{1}\delta)}<\eta^{(1-\delta)\alpha},
\]
which completes the proof of the lemma. \end{proof} After this point
the argument proceeds exactly as in the proofs of Theorems \ref{thm:main}
and \ref{thm:conv}, and we can complete the proof of Theorem \ref{OP}.

\section{Applications and remarks}

For a self-similar set the similarity dimension $s$ is defined to
be the unique solution of $\sum_{\lambda\in\Lambda}r_{\lambda}^{s}=1$.
In the case where the similarity dimension is less than or equal to
$1$ there is a more straightforward proof for Theorem \ref{thm:main},
where $\nu$ can simply be taken to be the self-similar measure with
weight $r_{\lambda}^{s}$ for each $\lambda\in\Lambda$. In fact with this assumption Theorem \ref{thm:main} can be extended to show no dimension drop for non-invariant measures and sets, where the dimension on the symbolic space is defined to be compatible with the self-similar set.

We can also give a general bound on the dimension of an ergodic measure $\mu$, projected to a self-similar set, in terms of $L^q$ dimensions. As above we let $\nu$ be the self-similar measure with
weight $r_{\lambda}^{s}$ for each $\lambda\in\Lambda$, where $s$ is the similarity dimension. If for $q>1$ we let $D_{\Pi\nu}(q)$ denote the $L^q$ dimension of $\Pi\nu$ and $\alpha_{\min}=\lim_{q\to\infty}D_{\Pi\nu}(q)$, then for any $0<\alpha<\alpha_{\min}$ there exists $C>0$ such that
$$\Pi\nu(B(x,r))\leq Cr^{\alpha}\textit{ for all }x\in\mathbb{R}\textit{ and }r>0\:.$$
Now by using some of the ideas appearing in the proofs above, it can be shown that
$$\dim \Pi\mu\geq \frac{h(\mu)}{-\chi_{\mu}}-(s-\alpha_{\min}).$$
This can be applied in situations where exponential separation is not satisfied but the $L^q$ spectrum is known, for examples of this see \cite{Fe}. In the case where $s>1$ it may be possible to adapt the methods given earlier to produce better methods, but this will be very dependent on the specific system.

If we have a diagonal self-affine system in the plane, satisfying suitable separation conditions, then we can combine our
Theorem \ref{thm:main} with Theorem 2.11 in \cite{FH} to show that
the dimension of any ergodic measure will be the Lyapunov dimension
(the Lyapunov dimension is the natural generalisation of the entropy
divided by Lyapunov exponent formula for ergodic measures projected
on self-affine systems). 

To give the full details of this let $\Lambda$ be a finite non-empty set and for each $\lambda\in\Lambda$ let  $\varphi_{\lambda}:\R^2\to\R^2$ be given by $\varphi_{\lambda}(x,y)=(a_{\lambda} x+s_{\lambda},b_{\lambda} y+t_{\lambda})$ where $0<|a_{\lambda}|,|b_{\lambda}|<1$ and $s_{\lambda},t_{\lambda}\in\R$. In this setting there exists a unique non-empty compact set $K$ such that $K=\cup_{\lambda\in\Lambda} \varphi_{\lambda}(K)$. Let $\Omega=\Lambda^{\mathbb{N}}$ and denote by $\Pi:\Omega\to K$ the natural projection to the self-affine set. We assume that this map is finite to one. In particular this is satisfied when the strong separation condition holds, in which case $\Pi$ is injective.

Let $\pi_x:\Omega\to\R$ denote the projection to the self-similar set given by $\{a_{\lambda}x+s_{\lambda}\}_{\lambda\in\Lambda}$ and 
$\pi_y:\Omega\to\R$ denote the projection to the self-similar set given by $\{b_{\lambda}y+t_{\lambda}\}_{\lambda\in\Lambda}$. 
For a fixed ergodic measure $\mu$ on $\Omega$ we let $\chi_x(\mu)$, $\chi_y(\mu)>0$ denote the Lyapunov exponents with respect to the respective self-similar systems. We also let $\chi_1(\mu)=\min\{\chi_x(\mu),\chi_y(\mu)\}$ and   
$\chi_2(\mu)=\max\{\chi_x(\mu),\chi_y(\mu)\}$, and let $\pi_1$ be the projection which corresponds to the smaller Lyapunov exponent $\chi_1(\mu)$.

In this setting Theorem 2.11 in \cite{FH} gives that 
$$\dim_H \Pi\mu=\frac{h_{\pi_1}(\mu)}{\chi_1(\mu)}+\frac{h(\mu)-h_{\pi_1}(\mu)}{\chi_2(\mu)}.$$
Here $h(\mu)$ is the usual entropy and $h_{\pi_1}(\mu)$ is the projected entropy which satisfies $\dim(\pi_1\mu)\chi_1(\mu)=h_{\pi_1}(\mu)$ (see Theorem 2.8 in \cite{FH}). Now suppose that the direction corresponding to the smaller Lyapunov exponent satisfies exponential separation. Then Theorem \ref{thm:main} gives that if $\chi_1(\mu)\geq h(\mu)$ then $h(\mu)=h_{\pi_1}(\mu)$ and so 
$$\dim_H \Pi\mu=\frac{h(\mu)}{\chi_1(\mu)}.$$
On the other hand if $\chi_1(\mu)< h(\mu)$ then Theorem \ref{thm:main} gives that $\chi_1(\mu)=h_{\pi_1}(\mu)$ and so
$$\dim_H \Pi\mu=1+\frac{h(\mu)-\chi_1(\mu)}{\chi_2(\mu)}.$$
This means that whenever the self-similar set corresponding to the smaller Lyapunov exponent satisfies exponential separation,
$$\dim_H \pi\mu=\min\left\{\frac{h(\mu)}{\chi_1(\mu)},1+\frac{h(\mu)-\chi_1(\mu)}{\chi_2(\mu)}\right\}$$
which is what we required.
  
Note that the requirement of $\Pi:\Omega\to X$ being finite to $1$ is used to show that the projected entropy in Theorem 2.11 in \cite{FH} is the same as the usual entropy. It should be possible to weaken this assumption considerably. For example to a suitable exponential separation condition for the diagonal self-affine set.
\subsection*{Acknowledgment}

A.R. acknowledges support from the Herchel Smith Fund at the University
of Cambridge. We also thank Pablo Shmerkin for suggesting the application
to convolutions of ergodic measures. We would also like to thank the anonymous referee for making several helpful suggestions to improve the manuscript.

\end{document}